\documentclass[11pt,a4paper]{article}

\usepackage[OT1]{fontenc}
\usepackage[english]{babel}
\usepackage[utf8x]{inputenc}

\usepackage[warn]{textcomp}

\usepackage{amsmath,amssymb,amsfonts,mathrsfs}
\usepackage[amsmath,thmmarks]{ntheorem}

\usepackage{graphicx}
\usepackage{nicefrac}

\usepackage{tikz}
\usetikzlibrary{matrix}

\allowdisplaybreaks

\usetikzlibrary{arrows,positioning} 
\tikzset{
    %Define standard arrow tip
    >=stealth',
    %Define style for boxes
    punkt/.style={
           rectangle,
           rounded corners,
           draw=black, very thick,
           text width=6.5em,
           minimum height=2em,
           text centered},
    pil/.style={->,shorten <=0pt,shorten >=6pt,}
}

%\usepackage[all]{xy}  

%\usepackage{chngcntr}
%\counterwithout{equation}{chapter}

%\usepackage{listings}
%\lstloadlanguages{Matlab} 

\usepackage[numbers,square]{natbib}

\usepackage{chngcntr}
\counterwithout{equation}{section}

%% German theorems

\theoremprework{\begin{minipage}{\textwidth}}
	\theorempostwork{\end{minipage}}

\newtheorem{theoremw}{Theorem}[section]

\newtheorem{definition}[theoremw]{Definition}

\newtheorem{lemma}[theoremw]{Lemma}

\newtheorem{proposition}[theoremw]{Proposition}

\newtheorem{def+prop}[theoremw]{Definition+Proposition}

\newtheorem{theorem}[theoremw]{Theorem}

\theorembodyfont{\normalfont}

%% Proof environment with a small square as a "qed" symbol
\theoremstyle{nonumberplain}
\theorembodyfont{\normalfont}
\theoremsymbol{\ensuremath{\square}}
\newtheorem{proof}{Proof}

\newcommand{\N}{\mathbb{N}}

\newcommand{\R}{\mathbb{R}}
\newcommand{\Z}{\mathbb{Z}}

\newcommand{\F}{\mathcal{F}}

\newcommand{\Prob}{\mathbb{P}}
\newcommand{\Ot}{\tilde{\mathbb{Q}}}
\newcommand{\Op}{\mathbb{Q}}
\newcommand{\Mt}{\mathcal{M}^t_1}
\newcommand{\Me}{\mathcal{M}_1}
\newcommand{\om}{\omega}

\newcommand{\h}{\mathsf{h}}

\newcommand{\Er}{\mathsf{E}}
\newcommand{\Pa}{\mathsf{P}}
\newcommand{\E}{\mathbb{E}}

\newcommand{\ind}{\mathbf{1}}

\title{The relation between quenched and annealed Lyapunov exponents in random potential on trees}
\author{Gundelinde Maria Wiegel}

\newcommand*{\SwitchToOpenAny}{\csname @openrightfalse\endcsname}
\newcommand*{\SwitchToOpenRight}{\csname @openrighttrue\endcsname}

\begin{document}

\maketitle

\begin{abstract}
In the first part of the article our subject of interest is a simple symmetric random walk on the integers which faces a random risk to be killed. This risk is described by random potentials, which in turn are defined by a sequence of independent and identically distributed non-negative random variables.
To determine the risk of taking a walk in these potentials we consider the decay of the Green function. 
There are two possible tools to describe this decay: The \textit{quenched Lyapunov exponents} and the \textit{annealed Lyapunov exponents}.
It turns out that on the integers we can state a precise relation between these two. The main tool for proving this is due to a special path property on the integers: if the random walk travels from one point to another one in finite time all the points between have to be passed in finite time as well. This allows using Ergodic Theorems. 
In the second part we show that the relation is also true in the case of Lyapunov exponents for simple symmetric random walks on $d$-regular trees and for random walks with drift.

{\let\thefootnote\relax\footnote
{2010 \emph{Mathematics Subject Classification.} 
60K37;  %Random walks in random environment
05C05. %trees (combinatorics) 

\emph{Key words and phrases.} Random walks, random potential, Lyapunov exponents, homogeneous trees, relative entropy.

Supported by the Austrian Science Fund (FWF): W1230.

\textit{Graz University of Technology}. Email: wiegel@math.tugraz.at.
}}
\end{abstract}

\section{Introduction}
\subsection{Random walks with random killing}
We consider a \textit{simple random walk}  $(S_n)_{n\geq 0}$ on the integers $\Z$ with starting point $x$. At each point of time it jumps independently of all the steps before with probability $\frac{1}{2}$ to the right or to the left. The path measure of the random walk will be denoted by $\Pa_x$ and the expectation value with respect to $\Pa_x$ by $\Er_x$. The simple symmetric random walk is a Markov process. Moreover, it is spatially homogeneous. 

Furthermore, we attach to each site $x \in \Z$ a so called \textit{random potential} $\omega(x)$ which influences the movement of the random walk. We assume that $\omega:=(\omega(x))_{x\in \Z}$ is a sequence of nonnegative random variables which are independently and identically distributed (i.i.d.) by the common measure $\nu$ on $[0,\infty)$. From this we obtain the canonical probability space $(\Omega, \F, \Prob)$ described by
\[\Omega := [0,\infty)^{\Z}\]
with its usual Borelian product $\sigma$-algebra $\F$ and the product measure
\begin{align*}
\Prob:=\bigotimes_{x\in\Z}\nu \, .
\end{align*}
The expectation value derived with respect to $\Prob$ will be denoted by $\E$. We assume that the potentials are not concentrated at 0, that is $\nu\neq\delta_0$, to avoid the trivial case. Considering the standard shift $T_i:\Omega\rightarrow\Omega$, $(\omega(x))_{x\in \Z}\mapsto((\omega(x-i)_{x\in \Z})$ for $i\in \Z$ it is clear that $\Prob$ is shift invariant.   

The random potentials represent a certain risk of dying for the random walk. For a fixed realization of the environment $\omega=(\omega(x))_{x\in \Z}$ the random walk dies with probability 
\[q(x):=1- \exp(-\omega(x))\]
at each site $x \in \Z$ it reaches. If the random walk survives the site, it will uniformly choose the next site of its journey. Thus, the random walk will jump with probability $\nicefrac{(1-q)}{2}$ to the right or to the left, see Figure \ref{fig:label4}. So, given the potentials, it becomes dangerous to take a walk in these environments. 

\begin{figure}[h]
\centering
\begin{tikzpicture}[x=1cm,y=1cm]
\draw[help lines,step=1, thin, draw=white] (-4,-1) grid (3,4);

\foreach \Point in {(0,0),(2,0),(-2,0)}{
    \node at \Point {{\textbullet}};
    }
\draw[draw=black] (0,0) -- (3.5,0);
\draw[draw=black] (-3.5,0) -- (0,0);
   
\node[] at (0,0) {}
 edge[pil,thin,->, bend left=45] node[auto] {$\nicefrac{(1-q)}{2}$} (2,0)
 edge[pil,thin,->, bend left=30] node[auto] {\small{$q$}} (1.5,2.5);
\node[] at (-2,0) {}
 edge[pil,thin,<-, bend left=45,pos=0.4] node[auto] {$\nicefrac{(1-q)}{2}$} (0,0); 
\node[] at (1.5,2.5)(death) {\Large{\textbf{\textdied}}}; 
\node[] at (3.5,-0.3) {$\Z$};
\end{tikzpicture}
\caption{Simple random walk with random killing where $p(x):=1- \exp(-\omega(x))$ for a fixed realization of the potentials $\omega$.}
 \label{fig:label4}
\end{figure}
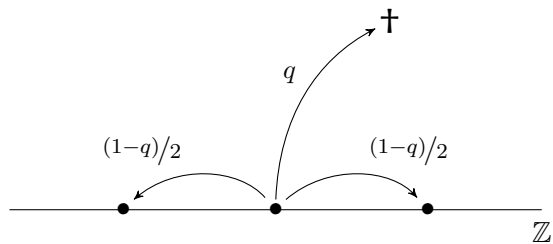

One measurement for the risk is the \textit{Green function}. For $x,y\in \Z$ and a realization of the potential $\omega$ we define it for the random walk with random killing by 
\begin{align*}
g(x,y,\omega):=\sum_{m>0}\Er_x\left[\exp\left(-\sum_{i=0}^m \omega(S_i) \right) \ind_{\{S_m=y\}}\right] \, .
\end{align*}
This is the expected number of visits in $y$ before the random walk starting at $x$ dies. We want to study the decay of $g$. In other words: how risky is it to walk around in this environment over long distances, that is if $|x-y|\rightarrow \infty$? Moreover $g$ is still a random variable in $\omega$ and we may ask the same for the averaged Green function with respect to $\Prob$. 

\subsection{Quenched and annealed Lyapunov exponents}
The Lyapunov exponents give us a precise description of the decay of Green's function. We start by shortly introducing them.
% and by giving some necessary definitions. 
Therefore we are in need of another two-point function, which is closely related to $g$. 

First of all we define the stopping times $\tau_y,y \in \Z,$ given by
\[\tau_y:=\inf\{n\geq 0\, : \, S_n=y\}\]
for each $y \in \Z$. This is the first point of time where the original random walk $(S_n)_{n\geq 0}$ hits $y\in \Z$.

\begin{definition} \label{quencheddef}
For any realization of the potentials $\omega \in \Omega$ and $x,y \in \Z$ we define the two-point-functions
\begin{align*}
e(x,y,\omega):=&\Er_x \left[\exp \left(-\sum_{k=0}^{\tau_y-1} \omega (S_k) \right),\, \tau_y< \infty\right] \, ,\\
a(x,y,\omega):=& -\ln e(x,y,\omega) \, . 
\end{align*}
\end{definition}
The quantity $e(x,y,\omega)$ represents the probability of the random walk reaching $y$ after it started at $x$ and before it dies due to a fixed potential $\omega$. As before, $e(x,y,\cdot)$ is a random variable in $\omega$. Consequently, the expected probability of surviving a journey from $x$ to $y$ will be of interest to us hereafter. This procedure of taking the average is also called annealing the environment.
\begin{definition} \label{annealeddef}
Let $x,y \in \Z$. We define
\begin{align*}
f(x,y):=&\E[e(x,y,\omega)]\, ,\\
b(x,y):=&-\ln f(x,y)  \, .
\end{align*}
\end{definition}

To answer the above stated question we observe the behaviour of $e$ and $f$ in the long-run, that is when the distance of $x$ and $y$ tends to infinity. Looking for a precise description we turn to \textit{Lyapunov exponents}. There are two ways of dealing with the random potential – the \textit{quenched} and the \textit{annealed} case.

In the quenched case we look at the exponential decay of the survival rate for a frozen realization of the potentials. 
\begin{proposition} \label{existencequenched}
We suppose that $\nu$ has finite expectation. Then, for all $x\in \Z$ there exists  the limit
\[\alpha(x):= \lim_{n\rightarrow \infty} \frac{1}{n}a(0,nx,\omega)= \lim_{n\rightarrow \infty} \frac{1}{n}\E[a(0,nx,\omega)]=\inf_{n\in N} \frac{1}{n}\E[a(0,nx,\omega)]\,\ \]
$\Prob$-almost surely and in $L^1$. Moreover, it holds that
\begin{align}
\alpha(x)=\E[a(0,x,\omega)]=|x|\, \E[a(0,1,\omega)]\, \label{specialformula}
\end{align}
so that  $\alpha(x)$ is a non-random norm.
\end{proposition}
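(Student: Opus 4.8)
The plan is to exploit the special path structure of $\Z$ to turn the \emph{a priori} only subadditive quantity $a(0,nx,\omega)$ into an \emph{exact} additive (Birkhoff) sum, so that the classical ergodic theorem applies directly rather than Kingman's subadditive version.

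First I would establish a multiplicative decomposition of $e$. Fix $n\geq 1$. Since the walk moves by steps $\pm1$, on the event $\{\tau_n<\infty\}$ it must hit $1,2,\dots,n-1$ before $n$, so $0=\tau_0\leq\tau_1\leq\cdots\leq\tau_n$. Conditioning at $\tau_{n-1}$ and using the strong Markov property of $(S_k)$ together with the additive splitting $\sum_{k=0}^{\tau_n-1}\omega(S_k)=\sum_{k=0}^{\tau_{n-1}-1}\omega(S_k)+\sum_{k=\tau_{n-1}}^{\tau_n-1}\omega(S_k)$ gives, for each frozen $\omega$, the identity $e(0,n,\omega)=e(0,n-1,\omega)\,e(n-1,n,\omega)$. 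Iterating yields $e(0,n,\omega)=\prod_{j=1}^{n}e(j-1,j,\omega)$ and hence, after applying $-\ln$, $a(0,n,\omega)=\sum_{j=1}^{n}a(j-1,j,\omega)$. By spatial homogeneity of the walk the factor $e(j-1,j,\omega)$ equals $e(0,1,T_{-(j-1)}\omega)$, so that $a(0,n,\omega)=\sum_{j=0}^{n-1}a(0,1,T_{-j}\omega)$ is precisely a Birkhoff sum for the shift $T_{-1}$ of the function $\omega\mapsto a(0,1,\omega)$. I stress that this is an identity of numbers for each fixed $\omega$; the summands are \emph{not} independent in $\omega$ (their potential regions overlap), but ergodicity, not independence, is what Birkhoff needs.

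Next I would verify the one hypothesis required to run Birkhoff's theorem, integrability of the base function; this is the step where finiteness of $\E[\omega(0)]$ enters and I expect it to be the only genuine obstacle. Bounding $e(0,1,\omega)$ from below by the single-step path $0\to1$, which has probability $\tfrac12$ and costs only $\omega(S_0)=\omega(0)$, gives $e(0,1,\omega)\geq\tfrac12\exp(-\omega(0))$, whence $0\leq a(0,1,\omega)\leq\ln 2+\omega(0)$. Since $\nu$ has finite mean, $\E[a(0,1,\omega)]\leq\ln 2+\E[\omega(0)]<\infty$, so $a(0,1,\cdot)\in L^1(\Prob)$. As $\Prob$ is a product measure the shift is ergodic, and Birkhoff's ergodic theorem yields $\frac1n a(0,n,\omega)\to\E[a(0,1,\omega)]$ both $\Prob$-almost surely and in $L^1$. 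For general $x>0$ the same identity reads $a(0,nx,\omega)=\sum_{j=0}^{nx-1}a(0,1,T_{-j}\omega)$, so $\frac1n a(0,nx,\omega)\to x\,\E[a(0,1,\omega)]$; the case $x<0$ follows from the reflection symmetry of $\Prob$ and of the walk, which gives $\E[a(0,-1,\omega)]=\E[a(0,1,\omega)]$, and $x=0$ is trivial since $\tau_0=0$ forces $a(0,0,\omega)=0$.

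Finally the two expectation formulas and the norm property fall out of the exact additivity. Taking $\E$ in $a(0,nx,\omega)=\sum_{j=0}^{|nx|-1}a(0,1,T_{-j}\omega)$ and using shift invariance of $\Prob$ gives $\E[a(0,nx,\omega)]=|nx|\,\E[a(0,1,\omega)]$ for every $n$; hence $\frac1n\E[a(0,nx,\omega)]=|x|\,\E[a(0,1,\omega)]$ is constant in $n$, so its limit and its infimum both equal $|x|\,\E[a(0,1,\omega)]$, and likewise $\E[a(0,x,\omega)]=|x|\,\E[a(0,1,\omega)]$, which is \eqref{specialformula}. All three expressions in the statement therefore coincide with $\alpha(x)=|x|\,\E[a(0,1,\omega)]$. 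This is manifestly non-random and of the form $c|x|$, so homogeneity and the triangle inequality are immediate; it is a genuine norm because $c=\E[a(0,1,\omega)]>0$: as $\nu\neq\delta_0$ we have $\Prob(\omega(0)>0)>0$, and on that event $e(0,1,\omega)<1$ since the term $\omega(S_0)=\omega(0)$ is always present in the exponent, so $a(0,1,\omega)>0$ with positive probability.
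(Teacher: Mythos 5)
Your proposal is correct and takes essentially the same route as the paper: the path-forced additivity $a(0,n,\omega)=\sum_{j=0}^{n-1}a(0,1,T_{-j}\omega)$ (the paper's Lemma \ref{additivitylemma}, proved via the inclusion $\{\tau_z<\infty\}\subseteq\{\tau_y<\infty\}$ and the strong Markov property) converts the problem into a Birkhoff sum, after which the pointwise and mean ergodic theorems, the integrability bound $0\le a(0,1,\omega)\le \omega(0)+\ln 2$, and shift invariance yield the limit, formula \eqref{specialformula}, and the norm property. The paper delegates the remaining details to Zerner's Propositions 4 and 10, but the argument it sketches is exactly yours.
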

The limit $\alpha(x)$ for $x \in \Z$ is called \textit{quenched Lyapunov exponent}. The proof of the existence we find in \cite[Proposition 4]{zerner}. In this paper Zerner introduced the quenched Lyapunov exponents for simple symmetric random walks on $\Z^d$, $d \geq 1$. Just representation (\ref{specialformula}) is special on the integers, see \cite[Proposition 10]{zerner}. On the integers the function $a$ is not only subadditive, but also additive. 
This property has its origin in the path properties of the simple symmetric random walk on the integers. The random walk can step just one site to the right or to the left at once (without involving the random potentials). That is, it cannot jump across a site, which implies 
%. This implies that the random walk has to pass all the points between $x$ and $z$ on its journey from $x$ to $z$ at least once and we conclude that 
\begin{align}
\{\tau_z < \infty \} \subseteq \{\tau_y < \infty\}\, \label{stoppingtimeinclusion}
\end{align} 
for all sites $y$ between $x$ and $z$. As this property is essential to the upcoming part we sketch the proof according to \cite[Proposition 10]{zerner}.
%Hereafter we will prove the additivity of $a$.
\begin{lemma} \label{additivitylemma}
Let $\omega$ be a realization of the potential and $x,y,z \in \Z$, with $y$ between $x$ and $z$. Then $a(x,z,\omega) = a(x,y,\omega) + a(y,z,\omega)$.
\end{lemma}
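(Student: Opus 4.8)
The plan is to prove the equivalent \emph{multiplicative} identity $e(x,z,\omega) = e(x,y,\omega)\,e(y,z,\omega)$; since $a = -\ln e$, taking logarithms then gives the asserted additivity immediately. I may assume $x$, $y$, $z$ are distinct, since if $y$ coincides with $x$ or $z$ the identity is trivial: starting at a point $w$ one has $\tau_w = 0$, the exponential sum is empty, and $e(w,w,\omega)=1$, i.e.\ $a(w,w,\omega)=0$.

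The starting observation is the integer path property (\ref{stoppingtimeinclusion}): because the walk moves one site at a time, any trajectory from $x$ that ever reaches $z$ must first pass through the intermediate site $y$, so $\{\tau_z<\infty\}\subseteq\{\tau_y<\infty\}$ and, on this event, $\tau_y\le\tau_z$. This is precisely what lets me split the accumulated potential along the trajectory at the hitting time of $y$: on $\{\tau_z<\infty\}$ I write
\[
\sum_{k=0}^{\tau_z-1}\om(S_k)=\sum_{k=0}^{\tau_y-1}\om(S_k)+\sum_{k=\tau_y}^{\tau_z-1}\om(S_k),
\]
where the second block begins exactly at $S_{\tau_y}=y$, so the potential at $y$ is counted once and lands in the second block. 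Exponentiating factorizes the integrand of $e(x,z,\omega)$ into one factor depending only on the walk up to time $\tau_y$ and one depending on the walk thereafter.

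The core step is then the strong Markov property of the walk at the stopping time $\tau_y$. Conditioning on the trajectory up to $\tau_y$, the shifted process $(S_{\tau_y+j})_{j\ge0}$ is a fresh simple random walk started at $y$, its first visit of $z$ occurs at time $\tau_z-\tau_y$, and $\{\tau_z<\infty\}$ becomes the event that this fresh walk reaches $z$. Hence the conditional expectation of the post-$\tau_y$ factor equals
\[
\Er_{y}\!\left[\exp\!\Big(-\sum_{k=0}^{\tau_z-1}\om(S_k)\Big),\,\tau_z<\infty\right]=e(y,z,\omega),
\]
which is deterministic given $\omega$ and therefore factors out. The remaining outer expectation over the walk up to $\tau_y$ is exactly $e(x,y,\omega)$, yielding the product formula.

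The main point to get right is the bookkeeping that makes the decomposition legitimate: one must verify via (\ref{stoppingtimeinclusion}) that $\tau_y$ is finite and $\tau_y\le\tau_z$ on the relevant event, so the sum genuinely splits at $\tau_y$ with no potential term dropped or double-counted. Once this integer-specific inclusion is secured, the strong Markov factorization is routine; the delicate ingredient is entirely the spatial one-dimensionality, which forces every path to $z$ through $y$. This is exactly what fails in higher dimensions and what upgrades $a$ from merely subadditive to additive here.
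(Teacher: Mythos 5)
Your proposal is correct and follows essentially the same route as the paper's own proof: both use the one-dimensional path inclusion $\{\tau_z<\infty\}\subseteq\{\tau_y<\infty\}$ to split the accumulated potential at $\tau_y$ and then apply the tower property together with the strong Markov property at $\tau_y$ to obtain the factorization $e(x,z,\omega)=e(x,y,\omega)\,e(y,z,\omega)$, from which additivity of $a$ follows by taking logarithms. Your write-up merely spells out the bookkeeping (finiteness and ordering of the stopping times, no double counting of $\omega(y)$) that the paper leaves implicit.
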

\begin{proof}
Let $\omega \in \Omega$ be a realization and without loss of generality $x \leq y \leq z \in \Z$. By (\ref{stoppingtimeinclusion}) we can modify function $e$ in the following way:
\begin{align*}
e(x,z,\omega)&=\Er_x \left[ \exp\left(-\sum_{i=0}^{\tau_z-1} \omega(S_i)\right) \cdot \ind_{\{\tau_z < \infty\}}\right]\\[2mm]
&=\Er_x \left[ \exp\left(-\sum_{n=0}^{\tau_z-1} \omega(S_i)\right) \cdot \ind_{\{\tau_z < \infty\}} \cdot \ind_{\{\tau_y < \infty\}} \right] \, .
\end{align*} 
This, together with the tower property and the strong Markov property for the stopping time $\tau_y$ yields
\begin{align}
e(x,z,\omega)&= e(x,y,\omega) \, e(y,z,\omega) \,  \label{multiplicationproperty}
\end{align} 
which implies the additivity of $a$. 
\end{proof}

The additivity of $a$ allows to use Birkhoff's and Neumann's ergodic theorems in the proof of Propostion \ref{existencequenched} instead of the subadditive limit theorem in higher dimensions. This provides formula (\ref{specialformula}) and proves Proposition \ref{existencequenched}. 

In a so called shape-theorem \cite[Theorem 8]{zerner} Zerner proved that the quenched Lyapunov exponents describe the exponential decay of the Green function as follows:
\begin{align*}
\lim_{|x|\rightarrow \infty} \frac{-\ln g(0,x,\omega)}{\alpha(x)}=\lim_{|x|\rightarrow \infty} \frac{-\ln e(0,x,\omega)}{\alpha(x)}=1
\end{align*}
for $\Prob$ almost all $\omega \in \Omega$ and in $L^1(\Prob)$. This is just the basic result concerning the decay of $g$, respectively $e$. For further reading we refer again to \cite{zerner} and to \cite{mourrat} and \cite{MountfordKosZer}.\\

%The following section will introduce the so-called \textbf{annealed Lyapunov exponents}.
In the case of the annealed Lyapunov exponents we have similar results. Here the expectation of the survival rate with respect to the potentials is taken before looking at its decay in the long run. 
\begin{proposition} \label{existenceannealed}
We suppose that $\nu$ has finite expectation. Then for all $x \in \Z$ there exists the limit
\[\beta(x):= \lim_{n\rightarrow \infty} \frac{1}{n}b(0,nx)=\inf_{n\in N} \frac{1}{n} b(0,nx)\,\ \] 
and $\beta$ is a norm. 
\end{proposition}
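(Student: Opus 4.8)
The plan is to show that the sequence $n \mapsto b(0,nx)$ is nonnegative and subadditive, and then to invoke Fekete's subadditivity lemma, which yields at once both the existence of the limit and its coincidence with the infimum. The crucial structural point is that, unlike the quenched function $a$, the function $b$ cannot be expected to be additive: the multiplicative decomposition (\ref{multiplicationproperty}) holds pathwise for each fixed $\omega$, but the two factors $e(0,mx,\cdot)$ and $e(mx,(m+n)x,\cdot)$ both depend on the potentials lying to the left of $mx$ and are therefore \emph{not} independent under $\Prob$. Consequently, averaging does not factorize $f$, and we must settle for an inequality in one direction.

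Without loss of generality I would take $x>0$ (the case $x<0$ follows from the reflection symmetry of the walk and of $\Prob$, and $x=0$ is trivial since $e(0,0,\omega)=1$). Applying (\ref{multiplicationproperty}) with $0\le mx\le (m+n)x$ gives $f(0,(m+n)x)=\E\big[e(0,mx,\omega)\,e(mx,(m+n)x,\omega)\big]$. The key observation is that both $\omega\mapsto e(0,mx,\omega)$ and $\omega\mapsto e(mx,(m+n)x,\omega)$ are nonincreasing in every coordinate $\omega(v)$, because raising a potential only lowers the weight $\exp(-\sum_k\omega(S_k))$ without affecting the path measure or the event $\{\tau<\infty\}$. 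Since $\Prob$ is a product measure, the Harris (FKG) inequality applies to these two decreasing functions and yields
\[
\E\big[e(0,mx,\omega)\,e(mx,(m+n)x,\omega)\big]\ \ge\ \E[e(0,mx,\omega)]\,\E[e(mx,(m+n)x,\omega)]\ =\ f(0,mx)\,f(0,nx),
\]
where the last equality uses the translation invariance $f(mx,(m+n)x)=f(0,nx)$, itself a consequence of the spatial homogeneity of the walk together with the shift invariance of $\Prob$. Taking $-\ln$ turns this into the subadditivity $b(0,(m+n)x)\le b(0,mx)+b(0,nx)$.

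To control the size of $b$ I would note that $e(x,y,\omega)\le 1$ gives $f\le 1$ and hence $b(0,nx)\ge 0$, while keeping only the direct path from $0$ to $nx$ gives $f(0,nx)\ge (1/2)^{nx}\big(\E[e^{-\omega(0)}]\big)^{nx}>0$, so that $0\le b(0,nx)\le Cn$ with $C=|x|\big(\ln 2-\ln\E[e^{-\omega(0)}]\big)$. Fekete's lemma then produces a finite, nonnegative limit equal to the infimum. For the norm properties, homogeneity $\beta(kx)=|k|\beta(x)$ follows directly from the definition of $\beta$ as a limit along the subsequence $n|k|$, and symmetry $\beta(-x)=\beta(x)$ from reflection invariance; together these give $\beta(x)=|x|\beta(1)$, so the triangle inequality reduces to $|x+y|\le |x|+|y|$. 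The one piece with real content is strict positivity: before hitting $n$ the walk must visit each of $0,1,\dots,n-1$, whence $e(0,n,\omega)\le \exp\big(-\sum_{j=0}^{n-1}\omega(j)\big)$ and therefore $f(0,n)\le \big(\E[e^{-\omega(0)}]\big)^{n}$. Since $\nu\neq\delta_0$ forces $\E[e^{-\omega(0)}]<1$, this gives $\beta(1)\ge -\ln\E[e^{-\omega(0)}]>0$, hence $\beta(x)>0$ for $x\neq 0$.

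I expect the FKG/monotonicity step to be the heart of the argument: the failure of independence of the two factors is harmless precisely because both are monotone in the i.i.d.\ environment, and Harris's inequality then delivers the correlation inequality in exactly the direction needed for subadditivity of $b$. Everything else is a direct application of Fekete's lemma or routine one-dimensional bookkeeping, so I would not expect any further obstacle there.
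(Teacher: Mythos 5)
Your proposal is correct, and the comparison here is a bit unusual: the paper contains no proof of Proposition \ref{existenceannealed} at all --- existence of $\beta$ is simply delegated to Flury \cite{flury}, who treats general $\Z^d$. Your argument is thus a self-contained substitute, and it is essentially the standard route from that literature, specialized to $d=1$: pathwise multiplicativity of $e$ plus the Harris/FKG inequality gives supermultiplicativity of $f$, hence subadditivity of $b$, and Fekete's lemma finishes. You correctly isolate the one genuine subtlety, namely that $e(0,mx,\cdot)$ and $e(mx,(m+n)x,\cdot)$ are \emph{not} independent under $\Prob$ (both depend on the potentials left of $mx$, since the walk may backtrack), and that the correlation inequality for two comonotone functions of an i.i.d.\ field repairs the factorization in exactly the direction needed; the only technical point worth a citation in a final write-up is that Harris' inequality extends to countably many coordinates (standard, via conditioning on finite coordinate sets and martingale convergence). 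Two remarks on what your route buys. First, on $\Z$ you even have exact multiplicativity (\ref{multiplicationproperty}), so FKG is the single inequality in the chain; on $\Z^d$, $d\ge 2$, the identical argument runs with the pathwise inequality $e(0,z,\omega)\ge e(0,y,\omega)\,e(y,z,\omega)$ (strong Markov property, restricting to paths through $y$), which is in essence how the cited general result is obtained. Second, your proof shows that the hypothesis of finite expectation of $\nu$ is not actually needed for the annealed exponent: the bound $0\le b(0,nx)\le n|x|\bigl(\ln 2-\ln \E[\exp(-\omega(0))]\bigr)<\infty$ holds for any $\nu$ on $[0,\infty)$, and strict positivity of $\beta$ uses only $\nu\ne\delta_0$ together with the one-dimensional path property (\ref{stoppingtimeinclusion}) --- the same property underlying Lemma \ref{additivitylemma} --- that forces the walk to visit every site between $0$ and $nx$. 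That positivity argument is genuinely one-dimensional; in higher dimensions positivity of the annealed exponent requires a separate argument.
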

We call $\beta(x)$ for each $x\in \Z$ the \textit{annealed Lyapunov exponent}. Flury has proven the existence of $\beta$ also on higher dimensional lattices in \cite{flury}. We find there analogous shape theorems for the averaged Green function, for example
\begin{align*}
\lim_{|x|\rightarrow \infty} \frac{-\ln \E[g(0,x,\omega)]}{\beta(x)}=1 \, .
\end{align*}
Indeed, in \cite{flury} a slightly more general case is treated, allowing other influences of the random potential on the random walk besides the extinction probability $1-\exp(-\omega(x))$ at each site $x\in \Z$.

\subsection{Results}
The main result of this paper is Theorem \ref{theorem}. It is a precise formula in terms of entropy which connects the annealed Lyapunov exponents with the quenched ones. This is the discrete analogon of Theorem 1.9 in \cite{sznitman}. The main tool for proving this is the additivity of $a$, see Lemma \ref{additivitylemma}. In section \ref{trees} we will introduce Lyapunov exponents on infinite regular trees. We will see that there – as a corollary of the aforegoing Section – the relation between annealed and quenched Lyapunov exponents holds as well. Moreover, we can generalize the result for non-symmetric simple random walks. \\

In addition to the aforementioned literature we recommend  \cite{sznitman98} for a general overview and the background on Lyapunov exponents to the reader. There, mainly Lyapunov exponents for Brownian Motion moving in Poissonian Potential are treated. Some further current results related to Lyapunov exponents appear in \cite{YilmazFree} and \cite{RassoulFreeEnergy}. Instead of Lyapunov exponents the authors observe quenched free energy and quenched point-to-point free energy in a more general situation of the random environment. More precisely it is a generalization of random walks in random potential and random walk in random environment. As one result they develop a variational formula for the quenched free energy also using entropy.  

Furthermore there are lots of related results for random walks in random environment. Zerner provides an introduction to Lyapunov exponents for RWRE in \cite{zernerPHD2RWRE}. In \citep{gantertRWRE1} random walks in random environment especially on the integers are discussed and they provide a relation of some rate functions for the quenched to the annealed random environment.    

\section{The relation between quenched and annealed Lyapunov exponents on the integers}\label{relation}
We have already seen that the quenched and the annealed Lyapunov exponents differ in their treatment of the random potential. Using the quenched approach, we observe the exponential decay for a typical realization of the random potentials. While using the annealed approach, we look at the environment as averaged. We now aim to describe this difference in greater detail. By applying Jensen's inequality it is easy to conclude that for the quenched and annealed Lyapunov exponents it holds
\[\alpha(x)\geq \beta(x)\, \]
for each $x\in \Z$. This relation also holds for simple symmetric random walks on $\Z^d$ with $d\geq 2$. But it turns out that in the case of a random walk on the integers we can prove an explicit formula for this relation. Before stating the main result we recall some definitions concerning entropy.

We consider the canonical projection $\pi_I: \Omega \rightarrow \Omega^I$
where $I$ is a finite subset of $\Z$ and $\Omega^I:=[0,\infty)^I$. The corresponding product $\sigma$-algebra is denoted by $\F_I$.
%This is again a $\sigma$-algebra on the whole of $\Omega$.
For any  probability measure $\Op$ on $\Omega$ we consider its restriction $\mathbb{Q}^I$ to $\F_I$. Now let $\Ot$ be a second probability measure on $\Omega$, which is absolutely continuous with respect to $\Op$ on $\F_I$ for a given finite intervall $I \subset \Z$. Hence we can define the Radon-Nikodym derivative $f_I$ on $\F_I$ which is a positive and $\F_I$-measurable function. 

\begin{definition}[Relative Entropy] \label{entropydef}
Let $\mathbb{Q}$ and $\Ot$ be two probability measures on $\Omega$ and $I \subset \Z$ a finite intervall. Then we call
\[H_I(\Ot|\Op):= 
\begin{cases}
\int_{\Omega} f_I \ln f_I \, d\Op  & \mbox{if}\,\ \Ot\ll \Op \,\ \text{on} \,\ \F_I  \\
\infty & \mbox{else}
\end{cases} \]
the \textbf{relative entropy of $\Ot^I$ with respect to $\Op^I$}.
\end{definition}
In most of the literature on Information Theory,
the relative entropy is denoted $D(\Ot\| \Op)$
and also known as the Kullback-Leibler divergence.
Equivalently, if $\Ot\ll \Op$ on $\F_I$  the relative entropy can be expressed by
$H_I(\Ot|\Op)=\int_{\Omega} \ln f_I \, d\Ot$. As the function $x \ln x$ is strictly convex we see by Jensen's inequality that $H_I$ is nonnegative and zero if and only if the two measures coincide. If $\Ot$ and $\Op$ are shift invariant for $(T_i)_{i \in \Z}$, the relative entropy $H_I(\Ot|\Op)$ is shift invariant as well. We denote by $ \Mt(\Omega)$ all shift invariant probability measures on $\Omega$. Furthermore, under the additional assumption that $\Op$ is a product measure on $\Omega$, the relative entropy becomes strongly superadditive,
% that is
%\[ H_J(\Ot|\Op)+ H_I(\Ot|\Op)-H_{I\cap J}(\Ot|\Op) \leq H_{J\cup I}(\Ot|\Op) \]
%for two finite intervals $I, J \subset \Z$,
 see \cite[Proposition 15.10]{georgii88}. Thus under these assumptions the  subadditive limit theorem guarantees the existence of 
\begin{align}
H(\Ot|\Op):=\lim\limits_{n \rightarrow \infty} \frac{1}{|I_n|} H_{I_n}(\Ot|\Op) = \sup_{I_n} \frac{1}{|I_n|} H_{I_n}(\Ot|\Op) 
\end{align} 
with $(I_n)_{n\in \N}$ a sequence of intervals  satisfying $I_n \subseteq I_{n+1}$ for each $n\in\N$ and $|I_n|{\rightarrow} \infty$ when $n\rightarrow \infty$. We call $H(\Ot|\Op)$ the \textit{specific relative entropy of $\Ot$ with respect to $\Op$}.
The distribution of the random potentials $\Prob$ fulfills all the above assumptions and the specific relative entropy $H(\Op|\Prob)$ is well-defined for any shift invariant probability measure $\Op$ on $\Omega$. 

Additionally, in relation with the quenched Lyapunov exponent, we define for $r< 0$ the functions $F_r: \Omega \rightarrow [0, \infty]$ by
\begin{align*}
F_r(\omega)&:=-\ln \Er_0\left[\exp\left(-\sum_{i=0}^{\tau_1-1} \omega(S_i)\right), \tau_1 < \tau_r , \tau_1 < \infty \right] 
\end{align*}
and their counterpart $F$ by omitting the inequality $\tau_1 < \tau_r$. It is easy to see that $F(\omega)=-\ln e(0,1,\omega)=a(0,1,\omega)$ and $0\leq F \leq \om(0)+\ln 2$. Consequently we have 
\begin{align}
\alpha(1)=\E[F(\omega)] \, .\label{comparison}
\end{align} 
%by the assumption that $\nu$ has finite expectation.

%We are now finally able to state and prove this paper's main result, 
Keeping in mind that $H(\Prob|\Prob)=0$ and formula (\ref{comparison}), the description of the relation between annealed and quenched Lyapunov exponents is the following:

\begin{theorem} \label{theorem} Let $(\Omega, \F, \Prob)$ be the probability space defined above where $\Prob=\otimes_{x\in\Z}\nu$. Moreover, we assume that $\nu$ has finite expectation. Then
\begin{align*}
\beta(1)= \inf_{\Op} \{\E^{\Op}[F(\omega)] + H(\Op|\Prob)\} 
\end{align*}
and the infimum runs over all shift invariant probability measures $\Op$ on $\Omega$.
\end{theorem}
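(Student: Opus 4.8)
The plan is to read $\beta(1)$ as a free energy and to recognise the claimed identity as the Gibbs variational principle (the process-level, ``level-3'', large-deviation duality) for that free energy relative to the product measure $\Prob$. First I would invoke the additivity from Lemma~\ref{additivitylemma} and the spatial homogeneity of the walk. Since $a(i,i+1,\om)=F(T_{-i}\om)$ (the shift merely re-centres the site $i$ at the origin, and its sign is irrelevant below because every competing measure is shift invariant), the multiplication property (\ref{multiplicationproperty}) gives
\begin{align*}
f(0,n)=\E\Big[\exp\Big(-\sum_{i=0}^{n-1}F(T_{-i}\om)\Big)\Big],\qquad \beta(1)=-\lim_{n\to\infty}\tfrac1n\ln f(0,n).
\end{align*}
Hence $\beta(1)$ is exactly the free energy of the observable $F$ sampled along the shift orbit under $\Prob$, and the right-hand side of the theorem is its entropy-penalised Legendre dual.

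Because $F$ is genuinely non-local — the walk may excurse arbitrarily far to the left before hitting $1$, so $F$ depends on all coordinates $\om(0),\om(-1),\dots$ — I would first establish the formula for the local truncations $F_r\ge F$, which are $\F_{\{r+1,\dots,0\}}$-measurable, and set $f_r(0,n):=\E[\exp(-\sum_{i=0}^{n-1}F_r(T_{-i}\om))]$ and $\beta_r(1):=-\lim_n\tfrac1n\ln f_r(0,n)$. For the bound ``$\le$'' I would use the elementary entropy inequality: for $\Phi_r:=\sum_{i=0}^{n-1}F_r(T_{-i}\om)$, which is measurable on the box $I_n=\{r+1,\dots,n-1\}$, and any shift invariant $\Op$,
\begin{align*}
-\ln f_r(0,n)\le \E^{\Op}[\Phi_r]+H_{I_n}(\Op|\Prob)=n\,\E^{\Op}[F_r]+H_{I_n}(\Op|\Prob),
\end{align*}
using shift invariance of $\Op$ in the last step; dividing by $n$ and using $|I_n|^{-1}H_{I_n}(\Op|\Prob)\to H(\Op|\Prob)$ yields $\beta_r(1)\le\E^{\Op}[F_r]+H(\Op|\Prob)$ for every $\Op$. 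For the matching bound ``$\ge$'' I would exploit that equality in the entropy inequality is attained at the tilted measure $d\Op_{n,r}\propto e^{-\Phi_r}\,d\Prob$; periodising its $I_n$-marginal with period $|I_n|$ and averaging over shifts produces a shift invariant $\bar\Op_{n,r}$ for which convexity and shift invariance of $H(\cdot|\Prob)$ give $H(\bar\Op_{n,r}|\Prob)\le|I_n|^{-1}H_{I_n}(\Op_{n,r}|\Prob)$, while locality of $F_r$ gives $\tfrac1n\sum_i\E^{\Op_{n,r}}[F_r(T_{-i}\om)]\ge\E^{\bar\Op_{n,r}}[F_r]-o(1)$. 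Since $\bar\Op_{n,r}$ is itself a shift invariant competitor, combining these gives $-\tfrac1n\ln f_r(0,n)\ge\E^{\bar\Op_{n,r}}[F_r]+H(\bar\Op_{n,r}|\Prob)-o(1)\ge\inf_{\Op}\{\E^{\Op}[F_r]+H(\Op|\Prob)\}-o(1)$, and letting $n\to\infty$ gives $\beta_r(1)\ge\inf_{\Op}\{\E^{\Op}[F_r]+H(\Op|\Prob)\}$. Thus $\beta_r(1)=\inf_{\Op}\{\E^{\Op}[F_r]+H(\Op|\Prob)\}$ for every fixed $r<0$.

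It remains to remove the truncation. Since $F_r\downarrow F$ as $r\to-\infty$, monotone convergence shows $\inf_{\Op}\{\E^{\Op}[F_r]+H(\Op|\Prob)\}\to\inf_{\Op}\{\E^{\Op}[F]+H(\Op|\Prob)\}$, by comparing a near-optimiser of the limiting functional against each truncated functional. Simultaneously $\Phi_r\downarrow\Phi$ forces $f_r(0,n)\uparrow f(0,n)$ for fixed $n$, hence $\beta_r(1)\ge\beta(1)$ with $\beta_r(1)$ decreasing as $r\to-\infty$; identifying its limit with $\beta(1)$ closes the argument.

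The step I expect to be the main obstacle is exactly this last identification, i.e. proving $\lim_{r\to-\infty}\beta_r(1)=\beta(1)$, which amounts to interchanging $\lim_{r\to-\infty}$ with $\lim_{n\to\infty}$. Probabilistically one must show that confining the walk to excursions of bounded left-depth does not alter the annealed decay rate, i.e. one has to control the cost — in both the accumulated potential and the walk's own entropy — of the deep left excursions that $F_r$ suppresses; this is where the finite-expectation hypothesis on $\nu$ should enter decisively. The only other point requiring care, already inside the truncated lower bound, is the $o(1)$ boundary bookkeeping in the periodisation, which is harmless because $F_r\ge0$ so the discarded terms have a favourable sign.
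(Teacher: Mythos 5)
Your plan splits into (i) the truncated identity $\beta_r(1)=\inf_{\Op\in\Mt(\Omega)}\{\E^{\Op}[F_r]+H(\Op|\Prob)\}$ for fixed $r<0$, proved by the entropy inequality in one direction (this half coincides with the paper's upper-bound argument, which is the same change of measure plus Jensen) and by tilting/periodisation in the other (a legitimate classical alternative to the paper's route), and (ii) removal of the truncation. The genuine gap is in (ii), and it is not the technicality you make it sound like. The paper uses $F_r$ \emph{only} for the inequality $\beta(1)\le\inf\{\cdot\}$, where the signs cooperate: $F_r\ge F$ gives $f(0,n+1)\ge\E[\exp(-\sum_k F_r\circ T_{-k})]$, hence $\beta(1)\le\E^{\Op}[F_r]+H(\Op|\Prob)$, and monotone convergence in $r$ finishes. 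For the opposite inequality the paper never truncates: since $F$ is continuous for the product topology and $-F\le 0$, the map $\Op\mapsto\E^{\Op}[-F]$ is upper semicontinuous and bounded above, so the upper-bound half of Varadhan's lemma \cite[Lemma 2.1.8]{deuschel} applied to the process-level LDP \cite[Theorem 6.13]{rassoul} yields $\beta(1)\ge\inf\{\E^{\Op}[F]+H(\Op|\Prob)\}$ directly, with no locality needed. In your scheme, by contrast, after step (i) you only know $\beta(1)\le\beta_r(1)$ and $\beta_r(1)\ge\inf\{\E^{\Op}[F]+H(\Op|\Prob)\}$, which say nothing about a lower bound for $\beta(1)$; and since you also show $\lim_{r\to-\infty}\inf\{\E^{\Op}[F_r]+H\}=\inf\{\E^{\Op}[F]+H\}$, the interchange $\lim_{r\to-\infty}\beta_r(1)=\beta(1)$ that you defer is, given the rest of your argument, \emph{equivalent} to the inequality $\beta(1)\ge\inf\{\E^{\Op}[F]+H(\Op|\Prob)\}$ you are trying to prove. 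As written, the hard half of the theorem is unproven.

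The good news is that the missing step can be closed by an elementary quenched estimate, though not via the mechanism you conjecture (the finite mean of $\nu$ is irrelevant here; it only guarantees finiteness of the various quantities). Decompose each excursion according to whether it reaches depth $r$: writing $e_r(k,k+1,\om):=\exp(-F_r(T_{-k}\om))$ and $d_k(\om):=e(k,k+1,\om)-e_r(k,k+1,\om)$, on the dipping event every site of $\{k+r,\dots,k\}$ is visited before $\tau_{k+1}$, so by gambler's ruin
\begin{align*}
d_k(\om)\;\le\;e^{-\sum_{x=k+r}^{k}\om(x)}\,\Pa_k[\tau_{k+r}<\tau_{k+1}]\;=\;\tfrac{1}{|r|+1}\,e^{-\sum_{x=k+r}^{k}\om(x)}\;\le\;\tfrac{2}{|r|+1}\,e_r(k,k+1,\om)\, ,
\end{align*}
the last step because $e_r(k,k+1,\om)\ge\tfrac12 e^{-\om(k)}\ge\tfrac12 e^{-\sum_{x=k+r}^{k}\om(x)}$ (one step to the right). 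Hence $e(k,k+1,\om)\le(1+\tfrac{2}{|r|+1})\,e_r(k,k+1,\om)$ for every $k$ and $\om$; multiplying over $k$, taking $\E$ and logarithms gives $\beta_r(1)\le\beta(1)+\ln(1+\tfrac{2}{|r|+1})$, which is exactly the interchange you need. With this inserted, your periodisation route becomes a complete and genuinely different proof of the lower bound, trading the LDP machinery for hands-on estimates. One further small repair: in the periodisation step the $O(|r|)$ boundary terms are not controlled by ``favourable sign'' alone, since they sit on the side of the inequality you need to bound from above; you need a uniform moment bound on the marginals of the periodised tilted measure (available, e.g., because the tilt $e^{-\Phi_r}$ is a decreasing function of $\om$, so the tilted marginals are stochastically dominated by those of $\Prob$).
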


A similar relation for Brownian motion moving in Poissonian potential was proven in \cite[Theorem 1.9]{sznitman}. We will follow Sznitman's ideas and split our proof in two parts proving the upper and lower bound 
\begin{align}
\beta(1)&\leq \inf_{\Op \in \Mt(\Omega)} \{E^{\Op}[F(\omega)] + H(\Op|\Prob)\} \label{firstpart} \\
\beta(1)&\geq \inf_{\Op \in \Mt(\Omega)} \{E^{\Op}[F(\omega)] + H(\Op|\Prob)\}  \label{secondpart}
\end{align} 
separately as well. 
%For the lower bound the already present product-structure of the probability space $(\Omega, \F, \Prob)$ is of great use.

\subsection{Proof of the upper bound}
\begin{proof} \textit{Upper bound} As a consequence of the multiplication property (\ref{multiplicationproperty}) of $e$ and the spatial homogeneousity we connect the definition of $f$ with $F$. The latter follows from the observation that $\{\tau_1 < \tau_r,\tau_1 < \infty\} \subseteq \{\tau_1 < \infty\}$.
\begin{align}
 f(0,n+1) &= \E\left[ \prod_{k=0}^{n} e(k,k+1,\omega)\right] = \E\left[ \prod_{k=0}^{n} e\left(0,1,T_{-k}(\omega)\right)\right]\notag\\
&=\E\left[\exp \left(\sum_{k=0}^{n} -F \circ T_{-k}(\omega)\right)\right] \label{frepresentation} \\
&\geq \E\left[\exp \left(\sum_{k=0}^{n} -F_r \circ T_{-k}(\omega)\right)\right] \, .  \label{firstest}
 \end{align}
Now let us consider an arbitrary shift invariant probability measure $\Op \in \Mt(\Omega)$. Thus $H(\Op|\Prob)$ is well defined and we assume additionally  
\[H(\Op|\Prob) < \infty \,\ \text{and} \,\
\E^{\Op}[F] < \infty \, .\]
This implies that $\Op \ll \Prob$ on $\F_{I}$ for each finite intervall $I \subset \Z$ and we define the corresponding Radon-Nikodym derivatives $f_I$ on $\F_I$, $I \subset \Z$. It is easy to see that each $f_I$ is strictly positive $\Op$-almost surely and for each finite $I \subset \Z$ it holds that
\begin{align}
\int_{\Omega} g\, d\Prob = \int_{\Omega} \frac{g}{f_I}\, d\Op \label{radonumschreiben}
\end{align}
for any $\F_I$ measurable function $g:\Omega \rightarrow \R$. The movement of the random walk in (\ref{firstest}) is restricted by the definition of $F_r$ to $A:=[r+1,n+1]$ and the function $\exp \left(\sum_{k=0}^{n} -F_r \circ T_{-k}(\omega)\right)$ is $\F_A$-measurable.
Then, by (\ref{firstest}),(\ref{radonumschreiben}) and Jensen's inequality we obtain:
\begin{align*}
f(0,n+1) &\geq \E\left[\exp \left(\sum_{k=0}^{n} -F_r \circ T_{-k}(\omega)\right)\right] \\
&= \E^{\Op}\left[\exp \left(\sum_{k=0}^{n} -F_r \circ T_{-k}(\omega)\right)\cdot \frac{1}{f_A}\right]\\
&\geq  \exp \left( \E^{\Op}\left[\sum_{k=0}^{n} -F_r \circ T_{-k} - \ln f_A \right] \right) \\
&= \exp \left( \E^{\Op}\left[\sum_{k=0}^{n} -F_r \circ T_{-k}\right]  - H_A(\Op|\Prob) \right) \, .
\end{align*}
Taking the $n+1$-th root, the negative logarithm and the limit $n$ to infinity on both sides yields
\begin{align*}
\beta(1) &\leq \E^{\Op} [F_r]  +  H(\Op|\Prob) \, . 
\end{align*}
Obviously, $\ind_{H(1)<H(r)}$ converges monotonously from below to $\ind$  for $r \rightarrow - \infty$ and we can replace $F_r$ by $F$.
% This holds for any shift invariant probability measure on $\Omega$ satisfying $H(\Op|\Prob) < \infty$ and $\E^{\Op}[F] < \infty$. 
Because of the finite expectation of $\nu$ there is at least $\Prob$ such that the right side of the last inequality is finite and we may deduce
\[\beta(1)\leq \inf_{\Op \in \Mt(\Omega)} \{E^{\Op}[F(\omega)] + H(\Op|\Prob)\}\,. \]
\end{proof}

\subsection{Proof of the lower bound}
In order to prove the lower bound (\ref{secondpart}) we need to make use of some statements from the theory of large deviations, more specifically from process level large deviations theory. 
A sequence  $\{\mu_n\}_{n\in \N}$ of probability measures on a polish space $E$ satisfies a \textit{large deviation principle with rate function $J$ and normalization $r_n$} if the following two inequalities hold: 
\begin{align}
\limsup_{n\rightarrow \infty} \frac{1}{r_n} \mu_n(F) &\leq -\inf_{x\in F} J(x) & \,\ &\forall F \subset E \, \text{closed} \label{ldpup} \\
\liminf_{n\rightarrow \infty} \frac{1}{r_n} \mu_n(G) &\geq -\inf_{x\in G} J(x) & \,\ &\forall G \subset E \, \text{open} \,
\end{align}
 where $J: E \rightarrow [0,\infty]$ is a lower semi-continuous function and $\{r_n\}_{n\in \N} \subset \R_{+}$ is a sequence of positive real numbers with $r_n \nearrow \infty$.
We abreviate by writing that $\text{LDP}(\mu_n,r_n,J)$ holds if all these requirements are satisfied. When the sets $\{x \in E: J(x) \leq c\}$ are compact for all $c \in [0;\infty)$ the rate function $J$ is said to be \textit{good}. 
Process level large deviations theory is concerned with the asymptotics of the distributions of the empirical measures of a whole process. 
Therefore we consider the random potentials $(\omega(i))_{i\in \Z}$ as a process with state space $[0,\infty)$.
%This yields our familiar configuration space $(\Omega,\F, \Prob)$. 
For this process the $n$-th empirical measure $R_n : \Omega \rightarrow \mathcal{M}_1(\Omega)$ is defined by 
\begin{align*} 
R_n(\omega) &:= \frac{1}{|I_n|} \sum_{i \in I_n} \delta_{T_{-i}(\omega)} 
\end{align*}
with the normalizing sequence of intervalls $(I_n)_{n\in \N}$ in $\Z$ given by $I_n:=\{i \in \Z : -n < i < n\}$ for each $n \in \N$.
Every probability measure $\Op$ on $\Omega$ induces a distribution $\mu_n \in \Me(\Me(\Omega))$ of the empirical measure $R_n$. The distributions of the empirical measures $(R_n)_{n\in \N}$ satisfy the LDP with good rate function $J:\Me(\Omega) \rightarrow [0,\infty]$ defined by
\begin{align}
J(\Op):= \begin{cases} H(\Op|\Prob) &\text{if} \,\ \Op \in \Mt(\Omega) \\
\infty &\text{else} \,  \label{ratefunction}
\end{cases}
\end{align}
and the normalizing sequence $(|I_n|)_{n\in \N}$, see \cite[Theorem 6.13]{rassoul}.

Basically the following proof is an application of a version of Varadhan's theorem.

\begin{proof} \textit{Lower bound} 
%As before let $(R_n)_{n\in \N}$ be the empirical measures of the random environment process $(\omega(i))_{i\in \Z}$. The empirical measures' distributions under $\Prob$ are denoted by $\{\mu_n\}_{n\in \N}$. 
Let  $\Phi: \mathcal{M}_1(\Omega) \rightarrow [-\infty,0]$ be the function defined by
\[\Phi(\Op):=\E^\Op[-F]\, .\] 
We have already seen that $F$ is a positive function. Hence $-F$ is bounded from above. Moreover, $F$ is a continuous function with respect to the product topology on $\Omega$. This is a consequence of the continuity of parameter dependent integrals. Recall that the weak convergence topology on $\Me(\Omega)$ is the coarsest topology such that for each bounded and continuous $f\in C_b(\Omega)$ the map $\Me(\Omega) \rightarrow \R$, $\rho \mapsto \int_{\Omega} f d\rho$ is continuous. 
Let $\rho \in \Me(\Omega)$ and let $(\rho_n)_{n\in \N} \in \Me(\Omega)$ be a sequence in $\Me(\Omega)$ which converges weakly to $\rho$. Then it holds that  
\[\limsup_{n \rightarrow \infty} \E^{\rho_n}[-F] \leq \E^{\rho}[-F] \, \]
as $-F$  is continuous and bounded from above. Consequently $\Phi$ is an upper-semi-continuous function with respect to the weak convergence topology. 

By (\ref{ratefunction}) we know that $LDP(\mu_n,|I_n|,J)$ holds with rate function $J$ and as $-F$ is negative, the set ${\{\Op\in \Me(\Omega): \Phi(\Op)\geq L\}}$ is empty for $L > 0$.

Thus applying \cite[Lemma 2.1.8]{deuschel} we conclude 
\begin{align*}
\limsup_{n\rightarrow\infty} \frac{1}{|I_n|} \ln \int\limits_{\Me(\Omega)} \exp \big[|I_n| \Phi(\Op)\big] \, d\mu_n(\Op) \notag \\
 \leq \sup_{\{\Op\in \Me(\Omega)\}} \big[\Phi(\Op)-J(\Op)\big] \,  
\end{align*}
%This is almost the final inequality we are looking for. Just a few simplifications and transformations are left:
and by some transformations and the shift invariance of $\Prob$ we obtain
\begin{align*}
\int\limits_{\Me(\Omega)} \exp \big(|I_n| \Phi(\Op)\big) \, d\mu_n(\Op)&=\E^{\Prob} \left[ \exp \big((2n-1) \E^{R_n}[-F] \big)\right] \\[-5mm]
&=\E^{\Prob} \left[ \exp \left(- \sum_{i =-(n-1)}^{n-1} F \circ T_{-i}(\omega)\right) \right] \\
& =\E^{\Prob} \left[ \exp \left(- \sum_{i=0}^{2n-1} F \circ T_{-i}(\omega)\right) \right] \, .
\end{align*}
Moreover, by (\ref{frepresentation}) it holds that
\begin{align*}
\beta(1) \geq \liminf_{n\rightarrow\infty} -\frac{1}{2n-1} \ln \E\left[ \exp \left(- \sum_{k=0}^{2n-1} F \circ T_{-k}(\omega)\right) \right]
\end{align*}
and we summarize for the annealed Lyapunov exponent:
\[\beta(1) \geq \inf_{\{\Op\in \Me(\Omega)\}} \left[\E^{\Op}[F]+J(\Op) \right]\, .\]
Due to the positivity of $F$ we see that $\E^{\Op}[F]\geq 0$ for all $\Op \in \Me(\Omega)$. By the definition of $J$ and the finite expectation of $\nu$ the infimum will not be reached for a not shift invariant measure. We conclude
\begin{align*}
\beta(1) \geq  \inf_{\{\Op\in \Mt(\Omega)\}} \left[\E^{\Op}[F]+H(\Op|\Prob)\right] \, . 
\end{align*}
\end{proof}

\section{Lyapunov exponents on trees} \label{trees}
The aforegoing result could have been mainly proven due to the additivity of $a$. This additivity property does not hold for simple symmetric random walks on the higher dimensional lattices $\Z^d$ with $d \geq 2$. Could there nevertheless be other structures on which the random walk moves, where we gain again the additivity of $a$?  One possible answer is a $d$-regular tree.\\
In this section we will see how we can apply our main result to Lyapunov exponents on a $d$-regular tree. First of all we give a short summary of the main ingredients of a regular tree. For a more precise introduction in the context of random walks, see e.g. \cite{woess}. Second we will introduce Lyapunov exponents on an infinite tree.

\subsection{Random walk with random killing on infinite d-regular trees}\label{treesection}
Let $d\geq 2$ and let $T_d$ denote the $d$-regular infinite tree. We call $V(T_d)$ the set of vertices of the tree and $E(T_d)$ the set of edges. Then we define a symmetric nearest neighbour random walk $(Z_n)_{n\in \N}$ on $T_d$ by choosing a starting point
\[Z_0=v\]
for $v\in V(T_d)$ and the transition probabilities given by
\begin{align}
p(x,y)=\begin{cases} 
\nicefrac{1}{d} & \text{if} \,\ x \sim y\\
0 & \text{else} \, .
\end{cases}
\end{align}
The relation $x \sim y$ for $x,y\in V(T_d)$ means that these two vertices are neighbours, i.e. $[x,y]$ is an edge in $E(T_d)$. Thus $(Z_n)_{n\in \N}$ is a spatially homogeneous Markov chain adapted to $T_d$. We will denote its path measure by $\Pa_v^T$ and the corresponding expectation value by $\Er_v^T$. 

As we have done before on the integers we attach to each vertex $x\in V(T_d)$ a random potential $\omega(x)$ and assume that $\omega:=(\omega(x))_{x\in V(T_d)}$ is a family of nonnegative random variables which are i.i.d. by $\nu$ on $[0,\infty)$.  From this we obtain again the canonical probability space $(\Omega, \F, \Prob)$ described by
\[\Omega := [0,\infty)^{V(T_d)}\]
with its usual product $\sigma$-algebra $\F$
%, generated by the cylindrical sets 
and the product measure
\begin{align*}
\Prob:=\bigotimes_{x\in V(T_d)}\nu \, .
\end{align*}
The expectation value with respect to $\Prob$ will be denoted by $\E^{\Prob}$. We assume again that 
%the potentials are not concentrated at 0 and 
$\nu$ has finite expectation.

As $T_d$ is a tree, it contains no cycles and we have for each $x,z \in V(T_d)$ an unique shortest path $\pi=[v,...,z]$ from $v$ to $z$. The distance between two vertices is defined as the length of this shortest path:
\[d(x,z):=\vert[x,...,z]\vert\, .\]
We call a sequence of distinct vertices $\pi=[\ldots, x_{-2},x_{-1},x_{0},x_{1},x_{2}, \ldots]\subset V(T_d)$ which satisfy $x_j \sim x_{j+1}$ for all $j\in \Z$ a \textit{geodesic}. If we have an one-sided infinite sequence $\pi=[x_{0},x_{1},x_{2}, \ldots]\subset V(T_d)$ with $x_j \sim x_{j+1}$ for all $j\geq 1$ we call it \textit{ray}. %We can always transform a ray into a geodesic by adding to the ray another ray which has the same starting point $x_0$ but a different neighbour to the starting point.
 
Now we are interested in the riskiness of walking around on this tree equipped with the random potentials. More precisely we want to observe how risky journeys along a fixed geodesic are. Here $\tau_{x},x \in V(T_d)$, is defined by
\[\tau_x:=\inf\{n\geq 0\, : \, Z_n=x\}\]
for each $x \in V(T_d)$. 

The two-point-functions defined in the following section are the counterparts to our well know functions $e,f,a$ and $b$ from the first part of the paper.
\begin{definition} Let $x,y \in V(T_d)$ be two vertices of the tree and $\omega$ a realization of the random potentials. We define:
\begin{align*}
F_q(x,y,\omega)&:=\Er_x^T \left[\exp \left(-\sum_{k=0}^{\tau_y-1} \omega (Z_k) \right), \tau_y< \infty\right] \, \\
A(x,y,\omega)&:=-\ln F_q(x,y,\omega)\, \\
F_a(x,y)&:=\E^{\Prob}\left[\Er_x^T \left[\exp \left(-\sum_{k=0}^{\tau_y-1} \omega (Z_k) \right), \tau_y< \infty\right]\right]\, \\
B(x,y)&:=-\ln F_a(x,y)\, .
\end{align*}
\end{definition}

The two functions $F_q$ and $F_a$ denote the probability that the random walk reaches $y$ after starting at $x$, for the quenched environment where the potentials are frozen and the averaged environment. In contrast to $e$ and $f$ the random walk here is driven by the different path measure $\Prob_x^T$ on the tree. 

Let us now fix a geodesic $\pi^*=[\ldots, x_{-2},x_{-1},x_{0},x_{1},x_{2}, \ldots]$ and look at the behaviour of $F_q(x_0,x_i,\omega)$ and $F_a(x_0,x_i)$ in the long run, that is if $d(x_0,x_i) \rightarrow \infty$. Having a closer look on the structure of the tree we see that each trajectory of the random walk from $x_i$ to $x_j$ contains the path $[x_i,x_j]\subset \pi$. That is, the random walk has to pass all these points on its journey at least once. Only the excursions to the branches beside the geodesic vary. For each $x,y \in V(T_d)$ a branch $T_{x,y}$ of the tree is defined by 
\[T_{x,y}:=\{v \in V(T_d): y \in \pi(x,v)\}\, .\]
This allows a modification of the model in the following way: We can combine all the risk, which the random walk has to face during its excursions into a modified potential for each point of the geodesic and in the end we identify the geodesic with the integers. For the latter let $x_0 \in V(T_d)$ be a fixed starting point of the random walk on the geodesic. Then we can split $\pi^*$ into two rays 
\begin{align*}
\pi^+=[x_{0},x_{1},x_{2}, \ldots] \,\ \text{and} \,\ \pi^-=[x_{0},x_{-1},x_{-2}, \ldots]\, .
\end{align*}
We identify the geodesic with the integers via 
\begin{align}
\lambda:\pi^* \rightarrow \Z, \, x_i\mapsto
\begin{cases}
 \mskip15mu d(x_i,x_0) & \text{if} \,\ x_i \in \pi^+\\
-d(x_i,x_0) & \text{if} \,\ x_i \in \pi^- \, \\
\end{cases} \label{integeridentification}
\end{align}
and vice versa. For modifying the potential we have to spend more effort. Moving on an infinite tree contains, besides the given potentials, the additional risk of leaving the geodesic and getting lost in the corresponding branch of the tree. Getting lost is understood as the random walk not returning to $\pi^*$ again in finite time and disappearing at infinity of the tree within such a branch.

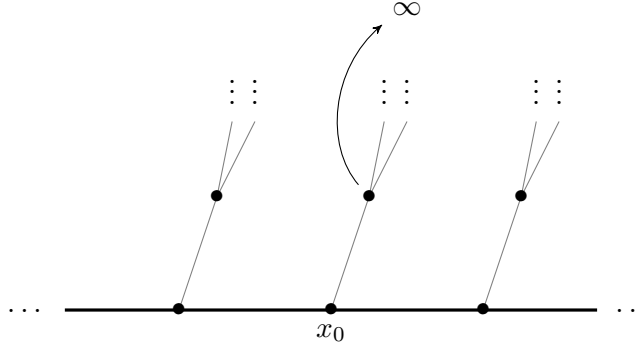
\begin{figure}[h]
\centering
\begin{tikzpicture}[x=1cm,y=1cm]
\draw[help lines,step=1, thin, draw=white] (-4,-1) grid (4,4);

\draw[thin,gray] (0,0) -- (0.5,1.5);
\draw[thin,gray] (2,0) -- (2.5,1.5);
\draw[thin,gray] (-2,0) -- (-1.5,1.5);

\draw[thin,gray] (0.5,1.5) -- (0.7,2.5);
\draw[thin,gray] (0.5,1.5) -- (1,2.5);

\draw[thin,gray] (2.5,1.5) -- (2.7,2.5);
\draw[thin,gray] (2.5,1.5) -- (3,2.5);

\draw[thin,gray] (-1.5,1.5) -- (-1.3,2.5);
\draw[thin,gray] (-1.5,1.5) -- (-1,2.5);

\foreach \Point in {(0,0),(2,0),(-2,0)}{
    \node at \Point {{\textbullet}};
    }

\foreach \Point in {(0.5,1.5),(2.5,1.5),(-1.5,1.5)}{
    \node at \Point {{\textbullet}};
    }

\foreach \Point in {(-4,0),(4,0)}{
    \node at \Point {{\ldots}};
    }

\draw[draw, very thick] (0,0) -- (3.5,0);
\draw[draw, very thick] (-3.5,0) -- (0,0);

\foreach \Point in {(0.7,3),(1,3),(2.7,3),(3,3),(-1.3,3),(-1,3)}{
    \node at \Point {{\vdots}};
    }
    
\node[] at (0,-0.3) {\textbf{$x_0$}};
\node[] at (1,4) {\textbf{$\infty$}}
 edge[pil,thin,<-, bend left=-45] node[auto] {} (0.5,1.5);

\end{tikzpicture}
\caption{A $3$-regular tree oriented on a fixed geodesic (black).}
\end{figure}

Consequently there are three possibilities how the random walk can die at a point $x_i$ of the geodesic: Firstly due to the given potential $\omega(x_i)$, secondly due to the potentials on the way on a finite excursion into a branch, thirdly because the random walk can disappear to infinity. We want to combine these three risks in a new sequence of potentials. To do so we define for $x_i \in \pi^*$ the new stopping time
\begin{align}
\sigma_{x_i}:=\inf\{n\geq 1: Z_n=x_{i+1}\,\ \text{or} \,\ Z_n=x_{i-1}\}\, . \label{stoppingtimetree}
\end{align} 
This is the first time the random walk on the tree hits the left or right neighbour of vertex $x_i$ on the geodesic. This stopping time is finite with a positive probability: Using the generating function technique described in \cite[Chapter 9]{woess}, it holds 
\begin{align*}
L(z):=& \sum_{n=1}^{\infty} \Pa_{x_i}^T[\sigma_{x_i}=n]\, z^n \\
=& \frac{2}{d} z+ \frac{d-2}{d} z \, F(z)\,  L(z) 
\end{align*}
where $F(z):=\sum_{k=1}^{\infty} \Pa_{x}^T[\tau_y =k]\, z^k$ for each $x,y\in V(T_d)$ with $y\sim x$. As the tree is regular and the random walk is space homogeneous, $F(z)$ 
is equal for each pair of neighbours. $F(1)$ is the probability that the random walk reaches a given neighbour of the starting point in finite time. It is well known, that $F(1)=\frac{1}{d-1}$ and we conclude that
\[0<\, \Pa_{x_i}^T[\sigma_{x_i} < \infty]=L(1)= \frac{2(d-1)}{(d-1)^2+1}\, \leq 1 \,  .\]  
Furthermore we define for each $x_i\in \pi^*$ the new random variable
\begin{align*}
h(x_i,\omega):=\Er_{x_i}^T\left[\exp\left(-\sum_{k=0}^{\sigma_{x_i}-1} \omega(Z_k)\right), \, \sigma_{x_i}<\infty\right] ,
\end{align*}
which is the expected probability that the random walk starting at $x_i$ survives its finite excursions to some branches before proceeding along the fixed geodesic $\pi^*$. Thus the probability to die at a vertex $x_i$, namely $1-h(x_i,\omega)$,
comprises the three types of possible risks outlined above. In order to use the aforegoing theory we perform a slight modification of $h$ to obtain the final new random potentials on the geodesic. 

\begin{proposition}\label{rhoprop}
Let $\pi^*\subset V(T_d)$ be a fixed geodesic. At each vertex $x_i\in \pi^*$ the random walk moving along $\pi^*$ survives with probability $e^{-\rho(x_i, \omega)}$
where $\rho=(\rho(x_i,\omega))_{x_i\in \pi^*}$ are positive i.i.d. random variables on $\Omega$ given by
\begin{align} \rho(x_i,\omega):=-\ln h(x_i,\omega)\, . \label{potentialtree}
\end{align}
Moreover, each $\rho(x_i,\omega)$ has finite expectation w.r.t. $\Prob$.
\end{proposition}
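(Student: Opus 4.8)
The plan is to verify the four assertions of the proposition in turn: that $\rho(x_i,\omega)=-\ln h(x_i,\omega)$ is a well-defined random variable taking values in $[0,\infty)$, that $e^{-\rho(x_i,\omega)}$ really is the survival probability at $x_i$, that the family $(\rho(x_i,\omega))_{x_i\in\pi^*}$ is independent and identically distributed, and that each member is $\Prob$-integrable. For the survival interpretation I would simply read off the definition of $h$: the event that the walk leaves $x_i$ alive and rejoins the geodesic at a neighbour is $\{\sigma_{x_i}<\infty\}$, while the factor $\exp(-\sum_{k=0}^{\sigma_{x_i}-1}\omega(Z_k))$ is the survival weight along such an excursion, so that the three death mechanisms described above are exactly complementary to $h(x_i,\omega)$ and $e^{-\rho(x_i,\omega)}=h(x_i,\omega)$ holds by definition.

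To pin down the range of $\rho$ I would use two elementary bounds. For the upper bound the discount factor never exceeds $1$, so $h(x_i,\omega)\le\Pa_{x_i}^T[\sigma_{x_i}<\infty]=L(1)\le1$; this gives $\rho(x_i,\omega)\ge-\ln L(1)\ge0$, and in fact $\rho(x_i,\omega)\ge-\ln L(1)>0$ whenever $d\ge3$, since then $L(1)<1$. For the lower bound I would keep only the single trajectory $Z_1=x_{i+1}$, which has probability $1/d$ and contributes the weight $\exp(-\omega(x_i))$, so that $h(x_i,\omega)\ge\frac1d\exp(-\omega(x_i))>0$; hence $\rho(x_i,\omega)<\infty$ $\Prob$-almost surely and $\rho$ is well defined.

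The main work is the i.i.d.\ statement, which I would base on a locality property of $h$. Writing $B_i$ for the cell consisting of $x_i$ together with the $d-2$ branches $T_{x_i,y}$ hanging off its off-geodesic neighbours $y$,
\[B_i:=\{x_i\}\cup\bigcup_{\substack{y\sim x_i\\ y\neq x_{i-1},\,x_{i+1}}} T_{x_i,y},\]
I claim $h(x_i,\omega)$ depends only on $(\omega(x))_{x\in B_i}$. Indeed, because $T_d$ is acyclic, any vertex outside $B_i$ can be reached from $x_i$ only after passing through $x_{i-1}$ or $x_{i+1}$, so up to time $\sigma_{x_i}$ the walk stays inside $B_i$; and since the step onto $x_{i\pm1}$ is necessarily taken from $x_i$, the summation range $0\le k\le\sigma_{x_i}-1$ never charges the potential at the endpoint $x_{i\pm1}$. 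As the cells $(B_i)_{x_i\in\pi^*}$ are pairwise disjoint and $\Prob=\bigotimes_{x}\nu$ is a product measure, the variables $h(x_i,\omega)$, and hence the $\rho(x_i,\omega)$, are independent. For equality of laws I would invoke homogeneity: the automorphism of $T_d$ translating $\pi^*$ so that $x_i\mapsto x_j$ carries $B_i$ onto $B_j$ and preserves $\Prob$, whence all $\rho(x_i,\cdot)$ share a common distribution.

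Finally, integrability follows from the same one-step bound: $h(x_i,\omega)\ge\frac1d\exp(-\omega(x_i))$ gives $0\le\rho(x_i,\omega)\le\ln d+\omega(x_i)$, so that $\E^{\Prob}[\rho(x_i,\omega)]\le\ln d+\E^{\Prob}[\omega(x_i)]<\infty$ by the finite expectation of $\nu$. I expect the locality argument to be the delicate step: it is what converts the geometric fact that excursions from distinct geodesic vertices explore disjoint subtrees into genuine probabilistic independence, and it relies on both of the above claims—that the walk cannot leave $B_i$ before $\sigma_{x_i}$, and that the endpoint potential $\omega(x_{i\pm1})$ is excluded from the discounted sum. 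If either failed, neighbouring cells would overlap at a geodesic vertex and independence would break down.
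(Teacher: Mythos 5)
Your proposal is correct and follows essentially the same route as the paper's proof: the survival interpretation is read off the definition of $h$, the i.i.d.\ property comes from the fact that the walk up to time $\sigma_{x_i}$ is confined to $x_i$ together with its off-geodesic branches (disjoint for distinct geodesic vertices, so the product structure of $\Prob$ gives independence), and integrability comes from bounding $h(x_i,\omega)$ below by the contribution of an immediate one-step exit, yielding $\rho(x_i,\omega)\le\omega(x_i)+\ln d$ in your version versus the paper's $\omega(x_i)+\ln(d/2)$ from the event $\{\sigma_{x_i}=1\}$. Your write-up is in fact somewhat more explicit than the paper's on the locality/disjointness of the cells $B_i$ and on identical distribution via tree homogeneity, but the ideas coincide.
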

\begin{proof}
We have seen before that the random walk survives each site on its journey with probability $e^{-\rho(x_i, \omega)}$. 
As $h(x_i,\omega)$ is $(0,1]$-valued, $\rho$ is nonnegative. Additionally, $\omega \mapsto \rho(x_i,\omega)$ is for each $x_i\in \pi^*$ a continuous function with respect to the product topology on $\Omega$ and consequently measurable. Due to the definition of $\sigma_{x_i}$ the movement of the random walk within the event which defines $h(x_i,\omega)$ is restricted to the union of $\{x_i\}$ with the branches $T_{x_i,y}$ with $y\neq \{x_{i+1},x_{i-1}\}$. This, together with the identical distribution of $(\omega(i))_{i\in T_d}$, implies that $(\rho(x_i,\omega))_{x_i\in \pi}$ are i.i.d. by the image measure $\tilde{\nu}$ defined by
\begin{align} 
\tilde{\nu}[B]=\nu\left[\{\omega:\rho(x_i,\omega)\in B \}\right] \label{imagemeasuremu}
\end{align}
for all Borel-sets $B\in \mathcal{B}([0,\infty))$. 
For the last statement we observe that
$\big\{ \sigma_{x_i}=1\big\} \subseteq \big\{ \sigma_{x_i}<\infty\big\}$
which implies
\begin{align*}
\E^{\Prob}[\rho(x_i,\omega)]&=\E^{\Prob}\left[-\ln \Er_{x_i}^T\left[\exp\left(-\sum_{k=0}^{\sigma_{x_i}-1} \omega(Z_k)\right)\ind_{\{\sigma_{x_i} < \infty\}}\right] \right]\\
&\leq\E^{\Prob}\left[-\ln \Er_{x_i}^T\left[\exp\left(-\sum_{k=0}^{\sigma_{x_i}-1} \omega(Z_k)\right) \ind_{\{\sigma_{x_i}=1\}} \right] \right]\\[2mm]
&=\E^{\Prob}\left[-\ln \Er_{x_i}^T\left[\exp\left(-\omega(x_i)\right) \ind_{\{\sigma_{x_i}=1\}}\right] \right]\\[2mm]
&=\E^{\Prob}[\omega(x_i)]-\ln\left(\frac{2}{d}\right)\, .
\end{align*}
As the random potentials $\omega$ are supposed to have finite expectation, this holds for  $\rho$ as well.
\end{proof}
We obtain for the new potentials $\rho$ the slightly modified probability space $(\tilde{\Omega}:=[0,\infty)^{\Z}, \tilde{\F}, \tilde{\Prob}:=\otimes_{\Z}\tilde{\nu})$ where $\tilde{\F}$ is the usual borelian product-$\sigma$-algebra and $\tilde{\nu}$ defined as in (\ref{imagemeasuremu}). If  $\rho$ is given, we just have to know how often the random walk visits the different sites of the geodesic. As the random walk $(Z_n)_{n\in \N}$ is a symmetric random walk, it holds that
\[\Pa_{x_i}^T[Z_{\sigma_{x_i}}=x_{i+1}|\sigma_{x_i}<\infty]=\frac{1}{2}=\Pa_{x_i}^T[Z_{\sigma_{x_i}}=x_{i-1}|\sigma_{x_i}<\infty]\, . \]
Therefore conditionally upon the events $[\sigma_{x_i}< \infty]$ the sequence of random variables $(\tilde{S}_n)_{n\in \N}$ defined by
\begin{align*}
&\tilde{S}_0:=x_0 \,\ \text{and} \,\ \tilde{S}_n:=Z_{\sigma_{\tilde{S}_{n-1}}}
\end{align*}
is a simple symmetric random walk on the geodesic $\pi^*$ with starting point $x_0$. This, together with (\ref{integeridentification}) enables us to define the Lyapunov exponents along a geodesic in the well-known manner for the random walk $(\tilde{S}_n)_{n\in \N}$ and the random potentials $\rho(\omega)$. Always $(\ref{integeridentification})$ in mind, we will still denote the sites of the geodesic by its original name instead of integer numbers. 
Let $\tilde{a}$ and $\tilde{b}$ be defined as in Definitions \ref{quencheddef} and \ref{annealeddef} but for $(\tilde{S}_n)_{n\in \N}$ and $\rho$. Then the finite expectation of $\rho$ guarantees the existence of the Lyapunov exponents $\tilde{\alpha}$ and $\tilde{\beta}$ as in Propositions \ref{existencequenched} and \ref{existenceannealed}. Moreover we see the following identity where $\tilde{\tau}_x$ denotes the first point of time where the random walk $(\tilde{S}_n)_{n\geq 0}$ hits $x\in \pi^*$:
\begin{align*}
&\tilde{a}(x_0,x_i,\rho):=-\ln \Er_{x_0}^{\tilde{S}}\left[\exp\left(-\sum_{k=0}^{\tilde{\tau}_{x_i}-1} \rho(\tilde{S}_k)\right), \tilde{\tau}_{x_i}<\infty\right] \\[2mm]
&= - \ln \Er_{x_0}^{\tilde{S}}\left[\prod_{k=0}^{\tilde{\tau}_{x_i}-1} \Er_{\tilde{S}_k}^T\left[\exp\left(-\sum_{m=0}^{\sigma_{\tilde{S}_k}-1} \omega(Z_m)\right)  \ind_{\sigma_{\tilde{S}_k}<\infty\}} \right] \ind_{\{\tilde{\tau}_{x_i}<\infty\}}\right]\\[2mm]
&= - \ln \Er_{x_0}^{\tilde{S}}\left[ \Er_{x_0}^T\left[\exp\left(-\sum_{m=0}^{\sigma_{\tilde{S}_{\tilde{\tau}_{x_i}-1}}-1} \omega(Z_m)\right) \vphantom{\ind_{\{\sigma_{\tilde{S}_{\tilde{\tau}_{x_i}-1}}<\infty\}}} \ind_{\{\sigma_{\tilde{S}_{\tilde{\tau}_{x_i}-1}}<\infty\}} \right]  \ind_{\tilde{\tau}_{x_i}<\infty\}}\right] \\[2mm]
&= - \ln \Er_{x_0}^{\tilde{S}}\left[ \Er_{x_0}^T\left[\exp\left(-\sum_{m=0}^{\sigma_{x_{i-1}}-1} \omega(Z_m)\right) \ind_{\{\sigma_{x_{i-1}}<\infty\}} \ind_{\{Z_{\sigma_{x_{i-1}}}=x_i\}} \right] \right.\\
&\mskip80mu +\Er_{x_0}^T\left[\exp\left(-\sum_{m=0}^{\sigma_{x_{i+1}}-1} \omega(Z_m) \ind_{\{\sigma_{x_{i+1}}<\infty\}} \ind_{\{Z_{\sigma_{x_{i+1}}}=x_i\}} \right]
 \ind_{\tilde{\tau}_{x_i}<\infty\}}\right]\\[2mm]
&= - \ln \Er_{x_0}^{\tilde{S}}\left[ \Er_{x_0}^T\left[\exp\left(-\sum_{m=0}^{\tau_{x_i}-1} \omega(Z_m)\right) \ind_{\{\tau_{x_i}<\infty\}} \right] \ind_{\{\tilde{\tau}_{x_i}<\infty\}}\right]\\[2mm]
&= - \ln \Er_{x_0}^T\left[\exp\left(-\sum_{m=0}^{\tau_{x_i}-1} \omega(Z_m)\right), \tau_{x_i}<\infty \right]\\[2mm]
&=A(x_0,x_i,\omega)
\end{align*}
for all $x_i \in \pi^*$. The same holds for $\tilde{b}$ and $B$. 
%Thereby we have identified our earlier subjects of interest $A$ and $B$ with the modified quantity $\tilde{a}$ and $\tilde{b}$. 
This provides the existence of the Lyapunov exponents along $\pi^*$: Let $x_i \in \pi^*$, then 
\begin{align*}
\tilde{\alpha}(x_i)&=\lim_{n\rightarrow \infty} \frac{1}{n}{A}(x_0,x_{ni},\omega) = \lim_{n\rightarrow \infty} \frac{1}{n}\E[A(x_0,x_{ni},\omega)]\\
&=\inf_{n\in N} \frac{1}{n}\E[A(x_0,x_{ni},\rho)] \quad \text{} 
\end{align*}
exists ${\tilde{\Prob}}$-a.s. and $L^1(\tilde{\Prob})$ and furthermore we have
\begin{align*}
\tilde{\beta}(x_i)&= \lim_{n\rightarrow \infty} \frac{1}{n}{B}(x_0,x_{ni})=\inf_{n\in N} \frac{1}{n} {B}(x_0,x_{ni}) \, .
\end{align*}
Due to the properties of $\rho$ in Proposition \ref{rhoprop} we can apply Theorem \ref{theorem} as well and see that our proven relation does hold for Lyapunov exponents on $d$-regular trees:
\[\tilde{\beta}(x_1)=\inf_{\Op}\{\E\left[ A(x_0,x_1, \rho)\right] +H(\Op|\tilde{\Prob})\}\, ,\]
where the infimum runs over all shift invariant probability measures on $\tilde{\Omega}$.

\subsection{Non-symmetric random walk on trees}
Let us go back to the integers for a moment. From now on we consider a nearest neighbour walk which is not symmetric. That is in each step it jumps independently of all the steps before with probability $p$ to the right and with probability $1-p$ to the left. It is easy to see that Propostion \ref{existencequenched}, Proposition \ref{existenceannealed} and Theorem \ref{relation} hold for this non-symmetric nearest neighbour random walk as well. 

There is also a non-symmetric random walk counterpart on infinite regular trees. To define this we fix a root $o \in V(T_d)$ and a geodesic $\Theta$ and consider a particular representation of infinite regular trees (see Figure \ref{fig:label3}). This representation displays the generations $H_i$, $i \in \Z$, of a tree. We write 
\[ \h(x):= i \,\ \mbox{if} \,\ x \in H_i \]
for each vertex $x \in V(T_d)$. For two vertices $x,y \in V(T_d)$ we call $y$ a \textit{predecessor} of $x$ if $\h(y)=\h(x)-1$ and a \textit{child} if  $\h(y)=\h(x)+1$. Note that each $H_i$ is infinite.

\begin{figure}[h] \label{generationtree}
\centering
\begin{tikzpicture}[x=0.8cm,y=0.8cm]
\draw[help lines,step=1, thin, draw=white] (-3,-4) grid (5,4);

\draw[thin] (3.6,9) -- (-1.8,-4.5);
\draw[thin] (2.2,5.5) -- (4,4.5);
%\draw[thin] (3.6,9) -- (5.4,8.2);
\draw[thin] (0,0) -- (0.8,-2);
\draw[thin] (0.8,-2) -- (0.4,-3.5);
\draw[thin] (0.8,-2) -- (1.2,-3.5);
\draw[thin] (-0.8,-2) -- (-0.4,-3.5);
\draw[thin] (1.2,-3.5) -- (1.4,-4.5);
\draw[thin] (1.2,-3.5) -- (1,-4.5);
\draw[thin] (0.4,-3.5) -- (0.2,-4.5);
\draw[thin] (0.4,-3.5) -- (0.6,-4.5);
\draw[thin] (-0.4,-3.5) -- (-0.6,-4.5);
\draw[thin] (-0.4,-3.5) -- (-0.2,-4.5);
\draw[thin] (-1.4,-3.5) -- (-1,-4.5);
\draw[thin] (1,2.5) -- (3.2,0);
\draw[thin] (3.2,0) -- (2.4,-2);
\draw[thin] (3.2,0) -- (4,-2);
\draw[thin] (2.4,-2) -- (2.0,-3.5);
\draw[thin] (2.4,-2) -- (2.8,-3.5);
\draw[thin] (4,-2) -- (3.6,-3.5);
\draw[thin] (4,-2) -- (4.4,-3.5);
\draw[thin] (2.0,-3.5) -- (1.8,-4.5);
\draw[thin] (2.0,-3.5) -- (2.2,-4.5);
\draw[thin] (2.8,-3.5) -- (2.6,-4.5);
\draw[thin] (2.8,-3.5) -- (3,-4.5);
\draw[thin] (3.6,-3.5) -- (3.4,-4.5);
\draw[thin] (3.6,-3.5) -- (3.8,-4.5);
\draw[thin] (4.4,-3.5) -- (4.2,-4.5);
\draw[thin] (4.4,-3.5) -- (4.6,-4.5);

\draw[dashed,gray,thin] (-2.3,0) -- (5.5,0);
\draw[dashed,thin,gray] (-2.3,2.5) -- (5.5,2.5);
\draw[dashed,thin,gray] (-2.3,5.5) -- (5.5,5.5);

\node[] at (-2.9,0) {\textbf{$H_{1}$}};
\node[] at (-2.9,2.5) {\textbf{$H_{0}$}};
\node[] at (-2.9,5.5) {\textbf{$H_{-1}$}};

\node[] at (0.75,2.65) {\textbf{$o$}};

%\node[] at (3.6,0.2) {\textcolor{red}{{$z$}}};
%\node[] at (4.3,-1.9) {\textcolor{red}{{$v$}}};
%\node[] at (2.9,-1.9) {\textcolor{red}{{$y$}}};

\node[] at (3.3,8.9) {\textcolor{black}{{$\Theta$}}};

\node[] at (1,2.5) {}
 edge[pil,thin,->, bend left=30,thick] node[auto] {\small{$p$}} (2.2,5.5)
 edge[pil,thin,->, bend left=30,thick] node[auto] {\textbf{$\frac{1-p}{2}$}} (3.2,0);

\node[] at (0,0) {}
 edge[pil,thin,<-, bend left=30,thick,below] node[auto] {$\frac{1-p}{2}$} (1,2.5);

\foreach \Point in {%(3.6,9),
(2.2,5.5),
(1,2.5), %(8,2.5),
(0,0),(3.2,0),%(6.4,0),%(9.6,0),
(-0.8,-2),(0.8,-2),(2.4,-2),(4,-2),
(-1.4,-3.5),(-0.4,-3.5),(0.4,-3.5),(1.2,-3.5),(2.0,-3.5),(2.8,-3.5),(3.6,-3.5),(4.4,-3.5),
(-1.8,-4.5),(-1,-4.5),(-0.6,-4.5),(-0.2,-4.5),(0.2,-4.5),(0.6,-4 .5),(1,-4.5),(1.4,-4.5),(1.8,-4.5),(2.2,-4.5),(2.6,-4.5),(3.0,-4.5),(3.4,-4.5),(3.8,-4.5),(3.8,-4.5),(4.2,-4.5),(4.6,-4.5)}{
    \node[] at \Point {{\textbullet}};
    }

\end{tikzpicture}
\caption{A $3$-regular infinite tree.}
    \label{fig:label3}
\end{figure}
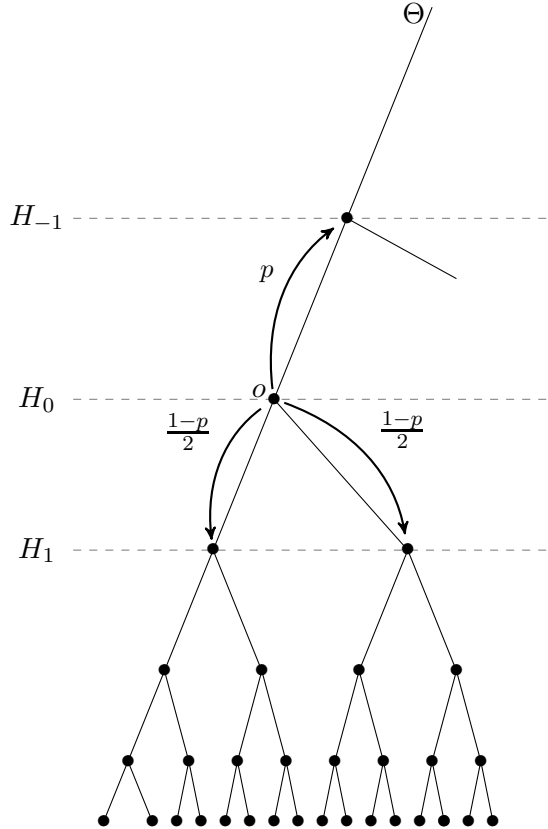
Then we define the random walk with drift $(Z_n)_{n\in \N}$ in the following way: The random walk jumps from its current position to the predecessor with probability $p$ and to a child with probability ${1-p}$. This yields the following transition probabilities for $u,v \in V(T_d)$
\begin{align}
\Pa^p[Z_{n+1}=v\mid Z_n=u]=
\begin{cases}
p & \mbox{if} \,\ u\sim v\,\ \text{and}\,\ \h(v)=h(u)-1 \\
\frac{1-p}{d-1} & \mbox{if}\,\  u\sim v\,\ \text{and}\,\ \h(v)=h(u)+1  \\
0 & \mbox{else} \,  \\
\end{cases} \label{notsymmetrictransition}
\end{align}
for each $n \in \N$. The second line is due to the fact that one vertex has $d-1$ children in the next generation. 

There are two kinds of infinite geodesics: the first has two ends downwards (like the thick geodesic in Figure \ref{fig:label1}), the second has one vertex in each generation (like the thick geodesic in Figure \ref{fig:label2}). 

Firstly we take a geodesic $\pi=[\ldots,x_{-1},x_0,x_1,\ldots]$ of the latter form, that is  $\h(x_{i+1})=\h(x_i)+1$ for each $i\in \Z$. 
%In the latter case we can identify the geodesic with the integers like we have done before as the drift along this geodesic is uniform. 
Then it holds that
\[\Pa_{x_i}^p[Z_{\sigma_{x_i}}=x_{i+1}|\sigma_{x_i}<\infty]=\frac{p}{p+\frac{1-p}{d-1}}=1-\Pa_{x_i}^p[Z_{\sigma_{x_i}}=x_{i-1}|\sigma_{x_i}<\infty]\,  \]
where $\Pa^p_{x_i}$ is the path measure of the non-symmetric random walk induced by (\ref{notsymmetrictransition}) with starting point $x_i$, and the stopping times $\sigma_{x_i}$ are defined as in (\ref{stoppingtimetree}) with respect to $\Pa^p$. Consequently the new sequence of random variables $(\tilde{S}_n)_{n\in \N}$ defined by $\tilde{S}_0:=x_0$ and $\tilde{S}_n:=Z_{\sigma_{\tilde{S}_{n-1}}}$ is a non-symmetric random walk on $\pi$. This together with Proposition \ref{rhoprop} provides again the existence of the quenched and annealed Lyapunov exponents and also their relation using relative entropy. 

Let us consider now a geodesic $\pi=[\ldots,x_{-1},x_0,x_1,\ldots]$ whose two ends point downwards. Such a geodesic has an highest point $x_k\in\pi$ like in Figure \ref{fig:label1} and it holds
\begin{align*}
\h(x_{i+1})=
\begin{cases}
 \h(x_i)+1 \,\ \mbox{if}  \,\ i\geq k \\
\h(x_i)-1 \,\ \mbox{if}  \,\ i<k \, .
\end{cases}
\end{align*}
Looking at the transition probabilities of the random walk at $x_k$ we see   
\[\Pa^p[Z_{i+1}=x_{k+1}\mid Z_i=x_k]=\frac{1-p}{d-1}=\Pa^p[Z_{i+1}=x_{k-1}\mid Z_i=x_k]\]
respectively
\[\Pa^p[Z_{i+1}=x_k\mid Z_i=x_{k+1}]=p=\Pa^p[Z_{i+1}=x_k\mid Z_i=x_{k-1}]\, \]
for the two different neighbours of $x_k$ (see Figure \ref{fig:label1}).
At this vertex the direction of the drift changes and the immediate identification with the integers does not work. We need a slight modification of the aforegoing setting. Let the starting point of the random walk be $x_0$. We observe the travelling risk in the direction of $x_i\in \pi$ for $i>0$. 
\begin{figure}[h]
\begin{minipage}[t]{0.5\textwidth}
\begin{tikzpicture}[x=0.6cm,y=0.6cm]
\draw[help lines,step=1, thin, draw=white] (-3,-4) grid (5,4);

\draw[thin,gray] (3.6,9) -- (-1.8,-4.5);
\draw[thin,gray] (2.2,5.5) -- (4,4.5);
%\draw[thin] (3.6,9) -- (5.4,8.2);
\draw[thin,gray] (0,0) -- (0.8,-2);
\draw[thin,gray] (0.8,-2) -- (0.4,-3.5);
\draw[thin,gray] (0.8,-2) -- (1.2,-3.5);
\draw[thin,gray] (-0.8,-2) -- (-0.4,-3.5);
\draw[thin,gray] (1.2,-3.5) -- (1.4,-4.5);
\draw[thin,gray] (1.2,-3.5) -- (1,-4.5);
\draw[thin,gray] (0.4,-3.5) -- (0.2,-4.5);
\draw[thin,gray] (0.4,-3.5) -- (0.6,-4.5);
\draw[thin,gray] (-0.4,-3.5) -- (-0.6,-4.5);
\draw[thin,gray] (-0.4,-3.5) -- (-0.2,-4.5);
\draw[thin,gray] (-1.4,-3.5) -- (-1,-4.5);
\draw[thin,gray] (1,2.5) -- (3.2,0);
\draw[very thick] (3.2,0) -- (2.4,-2);
\draw[very thick] (3.2,0) -- (4,-2);
\draw[very thick] (2.4,-2) -- (2.0,-3.5);
\draw[thin,gray] (2.4,-2) -- (2.8,-3.5);
\draw[thin,gray] (4,-2) -- (3.6,-3.5);
\draw[very thick] (4,-2) -- (4.4,-3.5);
\draw[thin,gray] (2.0,-3.5) -- (1.8,-4.5);
\draw[very thick] (2.0,-3.5) -- (2.2,-4.5);
\draw[thin,gray] (2.8,-3.5) -- (2.6,-4.5);
\draw[thin,gray] (2.8,-3.5) -- (3,-4.5);
\draw[thin,gray] (3.6,-3.5) -- (3.4,-4.5);
\draw[thin,gray] (3.6,-3.5) -- (3.8,-4.5);
\draw[thin,gray] (4.4,-3.5) -- (4.2,-4.5);
\draw[very thick] (4.4,-3.5) -- (4.6,-4.5);

\draw[dashed,gray,thin] (-2.4,0) -- (5.5,0);
\draw[dashed,thin,gray] (-2.4,2.5) -- (5.5,2.5);
\draw[dashed,thin,gray] (-2.4,5.5) -- (5.5,5.5);

\node[] at (-3,0) {\textbf{$H_{1}$}};
\node[] at (-3,2.5) {\textbf{$H_{0}$}};
\node[] at (-3,5.5) {\textbf{$H_{-1}$}};

\node[] at (0.7,2.8) {\textbf{$o$}};

\node[] at (3.6,0.3) {\textbf{{$x_k$}}};
%\node[] at (4.6,-1.9) {\textbf{{$x_{k+1}$}}};
\node[] at (4.9,-3.4) {\textbf{{$x_0$}}};
\node[] at (5.1,-4.5) {\textbf{{$x_i$}}};200

%\node[] at (1.9,-1.9) {\textbf{{$x_{k-1}$}}};

\node[] at (3.2,8.9) {\textcolor{black}{{$\Theta$}}};

\node[] at (3.2,0) {}
 edge[pil,thin, <-, bend left=30,thick] node[auto] {\small{$p$}} (4,-2);
 
\node[] at (2.4,-2) {}
 edge[pil,thin,->, bend left=30,thick,pos=0.4] node[auto] {\small{$p$}} (3.2,0);
 
 \node[] at (4.9,-3.4) {}
 %edge[pil,thin,->,thick] node[auto] {} (5.1,-4.5)
 ;

\foreach \Point in {%(3.6,9),
(2.2,5.5),
(1,2.5), %(8,2.5),
(0,0),(3.2,0),%(6.4,0),%(9.6,0),
(-0.8,-2),(0.8,-2),(2.4,-2),(4,-2),
(-1.4,-3.5),(-0.4,-3.5),(0.4,-3.5),(1.2,-3.5),(2.0,-3.5),(2.8,-3.5),(3.6,-3.5),(4.4,-3.5),
(-1.8,-4.5),(-1,-4.5),(-0.6,-4.5),(-0.2,-4.5),(0.2,-4.5),(0.6,-4 .5),(1,-4.5),(1.4,-4.5),(1.8,-4.5),(2.2,-4.5),(2.6,-4.5),(3.0,-4.5),(3.4,-4.5),(3.8,-4.5),(3.8,-4.5),(4.2,-4.5),(4.6,-4.5)}{
    \node at \Point {{\textbullet}};
    }

\end{tikzpicture}

\caption{Original geodesic $\pi$}
    \label{fig:label1}
\end{minipage}
\begin{minipage}[t]{0.5\textwidth}
\begin{tikzpicture}[x=0.6cm,y=0.6cm]
\draw[help lines,step=1, thin, draw=white] (-3,-4) grid (5,4);

\draw[thin,gray] (1,2.5) -- (-1.8,-4.5);
\draw[very thick] (3.6,9) -- (1,2.5);
\draw[thin] (2.2,5.5) -- (4,4.5);
%\draw[thin] (3.6,9) -- (5.4,8.2);
\draw[thin,gray] (0,0) -- (0.8,-2);
\draw[thin,gray] (0.8,-2) -- (0.4,-3.5);
\draw[thin,gray] (0.8,-2) -- (1.2,-3.5);
\draw[thin,gray] (-0.8,-2) -- (-0.4,-3.5);
\draw[thin,gray] (1.2,-3.5) -- (1.4,-4.5);
\draw[thin,gray] (1.2,-3.5) -- (1,-4.5);
\draw[thin,gray] (0.4,-3.5) -- (0.2,-4.5);
\draw[thin,gray] (0.4,-3.5) -- (0.6,-4.5);
\draw[thin,gray] (-0.4,-3.5) -- (-0.6,-4.5);
\draw[thin,gray] (-0.4,-3.5) -- (-0.2,-4.5);
\draw[thin,gray] (-1.4,-3.5) -- (-1,-4.5);
\draw[very thick] (1,2.5) -- (3.2,0);
\draw[thin,gray] (3.2,0) -- (2.4,-2);
\draw[very thick] (3.2,0) -- (4,-2);
\draw[thin,gray] (2.4,-2) -- (2.0,-3.5);
\draw[thin,gray] (2.4,-2) -- (2.8,-3.5);
\draw[thin,gray] (4,-2) -- (3.6,-3.5);
\draw[very thick] (4,-2) -- (4.4,-3.5);
\draw[thin,gray] (2.0,-3.5) -- (1.8,-4.5);
\draw[thin,gray] (2.0,-3.5) -- (2.2,-4.5);
\draw[thin,gray] (2.8,-3.5) -- (2.6,-4.5);
\draw[thin,gray] (2.8,-3.5) -- (3,-4.5);
\draw[thin,gray] (3.6,-3.5) -- (3.4,-4.5);
\draw[thin,gray] (3.6,-3.5) -- (3.8,-4.5);
\draw[thin,gray] (4.4,-3.5) -- (4.2,-4.5);
\draw[very thick] (4.4,-3.5) -- (4.6,-4.5);

\draw[dashed,gray,thin] (-2.4,0) -- (5.5,0);
\draw[dashed,thin,gray] (-2.4,2.5) -- (5.5,2.5);
\draw[dashed,thin,gray] (-2.4,5.5) -- (5.5,5.5);

\node[] at (-3,0) {\textbf{$H_{1}$}};
\node[] at (-3,2.5) {\textbf{$H_{0}$}};
\node[] at (-3,5.5) {\textbf{$H_{-1}$}};

\node[] at (0.7,2.8) {\textbf{$o$}};
\node[] at (2,2.8) {\tiny{$(\tilde{x}_{k-1})$}};

\node[] at (3.6,0.3) {\textbf{{$x_k$}}};
%\node[] at (4.6,-1.9) {\textbf{{$v$}}};
\node[] at (4.9,-3.4) {\textbf{{$x_0$}}};
\node[] at (5.1,-4.5) {\textbf{{$x_i$}}};

\node[] at (3.2,8.9) {\textcolor{black}{{$\Theta$}}};

\node[] at (3.2,0) {}
 edge[pil,thin, <-, bend left=30,thick] node[auto] {\small{$p$}} (4,-2);
 
\node[] at (1,2.5) {}
 edge[pil,thin,<-, bend left=30,thick,pos=0.5] node[auto] {\small{$p$}} (3.2,0);

\foreach \Point in {%(3.6,9),
(2.2,5.5),
(1,2.5), %(8,2.5),
(0,0),(3.2,0),%(6.4,0),%(9.6,0),
(-0.8,-2),(0.8,-2),(2.4,-2),(4,-2),
(-1.4,-3.5),(-0.4,-3.5),(0.4,-3.5),(1.2,-3.5),(2.0,-3.5),(2.8,-3.5),(3.6,-3.5),(4.4,-3.5),
(-1.8,-4.5),(-1,-4.5),(-0.6,-4.5),(-0.2,-4.5),(0.2,-4.5),(0.6,-4 .5),(1,-4.5),(1.4,-4.5),(1.8,-4.5),(2.2,-4.5),(2.6,-4.5),(3.0,-4.5),(3.4,-4.5),(3.8,-4.5),(3.8,-4.5),(4.2,-4.5),(4.6,-4.5)}{
    \node at \Point {{\textbullet}};
    }

\end{tikzpicture}

\caption{Modified geodesic $\pi'$}
    \label{fig:label2}
\end{minipage}
\end{figure}

If $k\leq 0$ the vertex $x_k$ is equal to $x_0$ or not on the path $[x_0,x_i]$ for any $i>0$ and we perform the following modification: Firstly we cut the geodesic at $x_k$ and take the ray which includes $x_i$. Then we add to the ray the predecessor of $x_k$, which we call $\tilde{x}_{k-1}$. Adding the predecessor of $\tilde{x}_{k-1}$ and successively all the next predecessors, we obtain the modified geodesic $\pi'=[\ldots,\tilde{x}_{k-2},\tilde{x}_{k-1},x_k,x_{k+1},\ldots]$ like in Figure \ref{fig:label2}. This modification doesn't change $A(x_0,x_{ni},\omega)$ and $B(x_0,x_{ni})$ for all $n\in \N$ but the new geodesic $\pi'$ can be identified with the integers as before. 

If $k>0$ the point $x_k$ is on the path $[x_0,x_{mi}]$ for a $m\in\N$. But we know that it holds for each $n\geq m$  
\begin{align*}
F_q(x_0,x_{ni},\omega)&=F_q(x_0,x_k,\omega)\, F_q(x_k,x_{ni},\omega) \\
&=: C(\omega)  \cdot \, F_q(x_k,x_{ni},\omega) \\
F_a(x_0,x_{ni})&\geq \E[C(\omega)]\cdot \, F_a(x_k,x_{ni})\, 
\end{align*}
using the additivity property (Lemma \ref{additivitylemma}) and the FKG-inequality.
Consequently, we see 
\begin{align}
A(x_0,x_{ni},\omega)&=-\ln C(\omega ) + A(x_k,x_{ni},\omega) \label{othersideone}\\
B(x_0,x_{ni})&\leq -\ln \E[C(\omega)] + B(x_k,x_{ni})\, .\label{othersidetwo}
\end{align}
But because it is more probable to survive shorter journeys, it holds
\begin{align}
B(x_0,x_{ni})&\geq B(x_k,x_{ni}) \label{oneside}
\end{align}
as well. In $A(x_k,x_{ni},\omega)$ and  $B(x_k,x_{ni})$ the random walk starts at $x_k$ and travels to the direction of $x_{ni}$. Hence we are in the aforegoing case of an uniform drift on the direct path from $x_k$ to $x_{ni}$ and we can identify the geodesic with the integers by
cutting it and adding the predecessors. Thus, the Lyapunov exponents with starting point $x_k$
\begin{align*}
\tilde{\alpha}(x_i)&:=\lim_{n\rightarrow\infty} \frac{1}{n} A(x_k,x_{ni},\omega) \\
\tilde{\beta}(x_i)&:=\lim_{n\rightarrow\infty} \frac{1}{n} B(x_k,x_{ni}) 
\end{align*}
exist due to the same arguments as in Section \ref{treesection}. This, together with (\ref{othersideone}),(\ref{othersidetwo}) and (\ref{oneside}) implies
\begin{align*}
\tilde{\alpha}(x_i)&=\lim_{n\rightarrow\infty} \frac{1}{n} A(x_0,x_{ni},\omega) \\
\tilde{\beta}(x_i)&=\lim_{n\rightarrow\infty} \frac{1}{n} B(x_0,x_{ni}) 
\end{align*}
and by coincidence of the limits it also holds that
\[\tilde{\beta}(x_1)=\inf_{\Op}\{\E\left[ A(x_0,x_1, \rho)\right] +H(\Op|\tilde{\Prob})\}\, ,\]
where the infimum runs over all shift invariant probability measures on $\tilde{\Omega}$.\\ \\
\small{\textbf{Acknowledgement:} I would like to extend my sincere gratitude to Prof. Martin Zerner for proposing the study of Lyapunov exponents to me and many helpful discussions and ideas. I thank PD Elmar Teufl who raised the idea of working on trees and Prof. Wolfgang Woess for supporting my work.}
\bibliographystyle{alpha}

\bibliography{literatur}

\begin{thebibliography}{RASY13}

\bibitem[CGZ00]{gantertRWRE1}
Francis Comets, Nina Gantert, and Ofer Zeitouni.
\newblock Quenched, annealed and functional large deviations for
  one-dimensional random walk in random environment.
\newblock {\em Probability Theory and Related Fields}, 118(1):65--114, 2000.

\bibitem[DS00]{deuschel}
Jean-Dominique Deuschel and Daniel~W. Stroock.
\newblock {\em Large Deviations}.
\newblock American Mathematical Soc., 2000.

\bibitem[Flu07]{flury}
Markus Flury.
\newblock Large deviations and phase transition for random walks in random
  nonnegative potentials.
\newblock {\em Stochastic Processes and their Applications}, 117(5):p.596 --
  612, 2007.

\bibitem[Geo88]{georgii88}
Hans-Otto Georgii.
\newblock {\em Gibbs Measures and Phase Transitions}.
\newblock De Gruyter studies in mathematics. Walter de Gruyter, 1988.

\bibitem[KMZ11]{MountfordKosZer}
Elena Kosygina, Thomas~S. Mountford, and Martin P.~W. Zerner.
\newblock Lyapunov exponents of green's functions for random potentials tending
  to zero.
\newblock {\em Probability Theory and Related Fields}, 150(1):43--59, 2011.

\bibitem[Mou12]{mourrat}
Jean-Christophe Mourrat.
\newblock Lyapunov exponents, shape theorems and large deviations for the
  random walk in random potential.
\newblock {\em ALEA, La. Am. J. Probab. Math. Stat.}, 9:165--209, 2012.

\bibitem[RAS14]{RassoulFreeEnergy}
Firas Rassoul-Agha and Timo Sepp{\"a}l{\"a}inen.
\newblock Quenched point-to-point free energy for random walks in random
  potentials.
\newblock {\em Probability Theory and Related Fields}, 158(3):711--750, 2014.

\bibitem[RAS15]{rassoul}
Firas Rassoul-Agha and Timo Seppäläinen.
\newblock {\em A Course on Large Deviations with an Introduction to Gibbs
  Measures}.
\newblock Graduate Studies in Mathematics. American Mathematical Society, 2015.

\bibitem[RASY13]{YilmazFree}
Firas Rassoul-Agha, Timo Seppäläinen, and Atilla Yilmaz.
\newblock Quenched free energy and large deviations for random walks in random
  potentials.
\newblock {\em Communications on Pure and Applied Mathematics}, 66(2):202--244,
  2013.

\bibitem[Szn95]{sznitman}
Alain-Sol Sznitman.
\newblock {Annealed Lyapounov exponents and large deviations in a Poissonian
  potential. I}.
\newblock {\em Annales scientifiques de l'École Normale Supérieure, Sér.
  4,}, 28 no. 3:p.345--370, 1995.

\bibitem[Szn98]{sznitman98}
Alain-Sol Sznitman.
\newblock {\em Brownian Motion, Obstacles and Random Media}.
\newblock Springer Monographs in Mathematics. Springer Berlin Heidelberg, 1998.

\bibitem[Woe09]{woess}
Wolfgang Woess.
\newblock {\em Denumerable Markov Chains: Generating Functions, Boundary
  Theory, Random Walks on Trees}.
\newblock EMS textbooks in mathematics. European Mathematical Society, 2009.

\bibitem[Zer98a]{zerner}
Martin P.~W. Zerner.
\newblock {Directional decay of the Green's function for a random nonnegative
  potential on ${\Z}\sp d$}.
\newblock {\em Ann. Appl. Probab.}, 8(1):246--280, 02 1998.

\bibitem[Zer98b]{zernerPHD2RWRE}
Martin P.~W. Zerner.
\newblock Lyapounov exponents and quenched large deviations for
  multidimensional random walk in random environment.
\newblock {\em Ann. Probab.}, 26(4):1446--1476, 10 1998.

\end{thebibliography}

\end{document}